\newtheorem{theorem}{Theorem}[section]
\newtheorem{lemma}[theorem]{Lemma}
\newtheorem{proposition}[theorem]{Proposition}
\theoremstyle{definition}
\newtheorem{definition}[theorem]{Definition}
\newtheorem{corollary}[theorem]{Corollary}
\newtheorem{remark}[theorem]{Remark}
\begin{document} 

\title{Monotonicity of non-pluripolar Monge-Amp\`ere masses}
\author{David Witt Nystr\"om}
\maketitle

\begin{abstract}
We prove that on a compact K\"ahler manifold, the non-pluripolar Monge-Amp\`ere mass of a $\theta$-psh function decreases as the singularities increase. This was conjectured by Boucksom-Eyssidieux-Guedj-Zeriahi who proved it under the additional assumption of the functions having small unbounded locus. As a corollary we get a comparison principle for $\theta$-psh functions, analogous to the comparison principle for psh functions due to Bedford-Taylor.
\end{abstract}

\section{Introduction}

Let $(X,\omega)$ be a compact K\"ahler manifold and $\theta$ be a smooth real $(1,1)$-form on $X$. Recall that a function $\phi:X\to [-\infty,\infty)$ is said to be $\theta$-psh if it is upper semicontinuous, $L^1_{loc}$ and $\theta+dd^c\phi\geq 0$ in the sense of currents. Note that by the $dd^c$-lemma any closed positive $(1,1)$-current $T$ in the class $[\theta]$ can be written as $T=\theta+dd^c\phi$ where $\phi$ is $\theta$-psh.

When $\phi$ is smooth one defines the Monge-Amp\`ere measure $MA_{\theta}(\phi):=(\theta+dd^c\phi)^n$, which will be a semipositive $(n,n)$-form, and clearly $$\int_X MA_{\theta}(\phi)=\int_X\theta^n.$$ By the fundamental work of Bedford-Taylor \cite{BT82,BT87} one can still define a Monge-Amp\`ere measure $MA_{\theta}(\phi)$, which will be a finite positive measure, only assuming $\phi$ to be locally bounded. Even when $\phi$ is not locally bounded one can define a positive measure $MA_{\theta}(\phi)$ called the non-pluripolar Monge-Amp\`ere measure, but a priori this could fail to be locally finite. However, Boucksom-Eyssidieux-Guedj-Zeriahi established in \cite{BEGZ} the finiteness of $MA_{\theta}(\phi)$, the mass being bounded by the volume of the class $[\theta]$ \cite{BEGZ} (for $\theta$ K\"ahler this was first shown by Guedj-Zeriahi \cite{GZ}). But in contrast to the case when $\phi$ is smooth (or locally bounded) the mass $\int_X MA_{\theta}(\phi)$ does not only depend on $[\theta]$. To describe the dependence of the Monge-Amp\`ere mass on $\phi$ we recall the following definition.

\begin{definition}
If $\phi$ and $\psi$ are two $\theta$-psh functions we say that $\phi$ is less singular than $\psi$ if $\phi\geq \psi+O(1)$.  
\end{definition}

Also recall that $\phi$ is said to have small unbounded locus if it is locally bounded away from a closed complete pluripolar subset of $X$. In \cite[Thm 1.16]{BEGZ} Boucksom-Eyssidieux-Guedj-Zeriahi proved that if $\phi$ is less singular than $\psi$ and both $\phi$ and $\psi$ have small unbounded locus then $$\int_X MA_{\theta}(\phi)\geq \int_X MA_{\theta}(\psi).$$ They conjectured this to be true even without the assumption of small unbounded locus. In this paper we prove their conjecture, i.e.:

\begin{theorem} \label{compthm}
Let $\phi$ and $\psi$ be two $\theta$-psh functions. If $\phi$ is less singular than $\psi$ then 
\begin{equation} \label{introineq}
\int_X MA_{\theta}(\phi)\geq \int_X MA_{\theta}(\psi).
\end{equation} 
\end{theorem}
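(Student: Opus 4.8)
The plan is to reduce the general case to the case of small unbounded locus already settled by Boucksom--Eyssidieux--Guedj--Zeriahi, by means of a suitable approximation of $\phi$ and $\psi$ from below by $\theta$-psh functions with small unbounded locus whose Monge--Amp\`ere masses converge to the right limits. The obvious candidates are the canonical approximants $\phi_j:=\max(\phi,V_\theta-j)$ (where $V_\theta:=\sup\{u\ \theta\text{-psh},\ u\le 0\}$, or any fixed bounded $\theta$-psh potential if the class is K\"ahler) and similarly $\psi_j:=\max(\psi,V_\theta-j)$. These are $\theta$-psh, locally bounded, decrease to $\phi$ and $\psi$ respectively, and $\phi_j$ is still less singular than $\psi_j$; being bounded they trivially have small unbounded locus, so $\int_X MA_\theta(\phi_j)\ge\int_X MA_\theta(\psi_j)$ holds for every $j$ by the known result. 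The issue is then purely one of continuity of the total mass along these decreasing sequences.

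The key input I would isolate and prove is therefore: \emph{if $u_j$ is a decreasing sequence of $\theta$-psh functions decreasing to a $\theta$-psh function $u$, then $\int_X MA_\theta(u_j)\to\int_X MA_\theta(u)$.} One inequality, $\liminf_j\int_X MA_\theta(u_j)\ge\int_X MA_\theta(u)$, should follow from the plurifine locality of the non-pluripolar product together with the Bedford--Taylor convergence theorem applied on the plurifine open sets $\{u>V_\theta-k\}$ where all the $u_j$ (for $j$ large, since $u_j\ge u$) agree with bounded functions: on each such set $MA_\theta(u_j)\to MA_\theta(u)$ weakly, and letting $k\to\infty$ recovers almost all of the mass of $MA_\theta(u)$. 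The reverse inequality $\limsup_j\int_X MA_\theta(u_j)\le\int_X MA_\theta(u)$ is the genuinely delicate point, since mass can a priori escape to the pluripolar locus in the limit; here one expects to need the sharp upper bound from \cite{BEGZ} ($\int_X MA_\theta(u)\le\mathrm{vol}[\theta]$ is not enough by itself) and, more importantly, a monotonicity statement in the other direction. In fact the cleanest route is to observe that $u\ge u_j+O(1)$ fails in general, so one cannot directly compare $u$ with $u_j$; instead I would compare $u_j$ with $u_{j+1}$ and with the truncations, running the small-unbounded-locus comparison between $\max(u,V_\theta-k)$ and $u_j$ for fixed $k$.

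Carrying this out, the main structural steps are: (1) set up the non-pluripolar product and recall its plurifine locality and the $MA_\theta(\max(\cdot,V_\theta-k))$ truncation formalism; (2) prove the lower semicontinuity of total mass under decreasing limits using Bedford--Taylor convergence on plurifine opens; (3) prove the upper semicontinuity, which I expect to be the main obstacle, presumably by a delicate use of the comparison between $u_j$ and its own truncations together with the known small-unbounded-locus monotonicity, perhaps combined with an integration-by-parts / telescoping argument controlling $\int(u_j-u)(\theta+dd^cu_j)\wedge(\cdots)$; (4) combine (2) and (3) to get continuity of mass along canonical decreasing approximations; (5) apply this to $\phi_j\downarrow\phi$ and $\psi_j\downarrow\psi$ and pass to the limit in the known inequality $\int_X MA_\theta(\phi_j)\ge\int_X MA_\theta(\psi_j)$ to conclude \eqref{introineq}. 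The corollary on the comparison principle will then follow by the standard Bedford--Taylor-type argument applied to $\max(\phi,\psi)$ versus $\phi$ and $\psi$, using \eqref{introineq} to control total masses.

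I expect step (3) — ruling out loss of mass to the pluripolar set in the decreasing limit — to be where essentially all the difficulty lies; everything else is bookkeeping with the non-pluripolar calculus. A workable strategy for (3) might be to note that for fixed $k$ the function $v_k:=\max(u,V_\theta-k)$ is less singular than every $u_j$, so $\int_X MA_\theta(v_k)\ge\int_X MA_\theta(u_j)$ by the small-unbounded-locus case (as $v_k$ is bounded), giving a uniform upper bound $\limsup_j\int_X MA_\theta(u_j)\le\int_X MA_\theta(v_k)$, and then $\int_X MA_\theta(v_k)\downarrow\int_X MA_\theta(u)$ as $k\to\infty$ by the locally-bounded Bedford--Taylor theory; combining with step (2) this would already close the argument. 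The delicate point in that variant is the very last limit $\int_X MA_\theta(v_k)\to\int_X MA_\theta(u)$, which is again a decreasing-limit mass continuity statement but now with $v_k$ bounded — so it must be handled by the plurifine-locality argument of step (2) rather than assumed. This circularity is what I anticipate having to untangle carefully.
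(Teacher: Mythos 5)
There is a genuine gap, and it is fatal to the strategy rather than merely delicate: the key input you isolate --- that $\int_X MA_\theta(u_j)\to\int_X MA_\theta(u)$ whenever $\theta$-psh functions $u_j$ decrease to a $\theta$-psh $u$ --- is false in general, not just hard to prove. For your canonical truncations $u_j=\max(u,V_\theta-j)$, every $u_j$ has \emph{minimal} singularities, so by the already known small-unbounded-locus case all the masses $\int_X MA_\theta(u_j)$ are equal to $\mathrm{vol}([\theta])$, whereas $\int_X MA_\theta(u)$ is in general strictly smaller: mass genuinely escapes to the pluripolar locus in the decreasing limit. A concrete counterexample is $X=\mathbb{P}^1$, $\theta=\omega_{FS}$, $u=\ln|z|^2_{FS}$: here $MA_\theta(u)=0$ (the non-pluripolar product discards the Dirac mass), while $\int_X MA_\theta(\max(u,-j))=1$ for every $j$. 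For the same reason the inequality $\int_X MA_\theta(\phi_j)\ge\int_X MA_\theta(\psi_j)$ you feed into the limit is vacuous (both sides equal $\mathrm{vol}([\theta])$), so no information about $\phi$ versus $\psi$ survives truncation; and your fallback for step (3), via $v_k=\max(u,V_\theta-k)$, requires exactly the false limit $\int_X MA_\theta(v_k)\to\int_X MA_\theta(u)$, which holds precisely when $u$ has full mass --- i.e.\ it is equivalent to the conclusion in that special case. Your step (2), the inequality $\liminf_j\int_X MA_\theta(u_j)\ge\int_X MA_\theta(u)$, is indeed true and standard, but it points in the direction that does not help transfer the inequality to the limit. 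In short, the circularity you flag at the end cannot be untangled: continuity of total mass along decreasing sequences is essentially the content of the theorem (and fails as stated), so approximation from below on $X$ cannot work without a new ingredient.

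The paper avoids this trap by changing the space rather than approximating the functions. Given $\phi$ (and a fixed $\phi_0$ with analytic singularities, available since one may assume $[\theta]$ big), one builds on $X\times\mathbb{P}^N$ a $\tilde\theta$-psh function $\Phi$, with $\tilde\theta=\pi_X^*\theta+\pi_{\mathbb{P}^N}^*\omega_{FS}$, as an upper envelope of the family $(1-|x|)(\phi_0+\ln|Z_0|^2_{FS})+|x|\phi+\sum_i x_i\ln|Z_i|^2_{FS}-\sum_i x_i^2$ over $x\in\Sigma_N$; since $\Phi\ge\phi_0+\ln|Z_0|^2_{FS}$, it automatically has small unbounded locus, and likewise for $\Psi$ built from $\psi$, with $\Phi$ less singular than $\Psi$. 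The BEGZ monotonicity then applies upstairs, and a fibered Legendre-transform computation (Proposition \ref{keyprop}) identifies $\int_{X\times\mathbb{P}^N}MA_{\tilde\theta}(\Phi)$ with $N\int_0^1\bigl(\int_X MA_\theta((1-t)\phi_0+t\phi)\bigr)t^{N-1}dt$, which tends to $\int_X MA_\theta(\phi)$ as $N\to\infty$ by continuity of the mass in $t$ (a consequence of multilinearity of the non-pluripolar product, not of any decreasing-limit continuity). That is the mechanism your proposal is missing.
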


\begin{remark}
In fact, \cite[Thm 1.16]{BEGZ} contains a more general statement about the pseudoeffectivity of certain differences of cohomology classes of non-pluripolar products (see Theorem \ref{BEGZthm}). 
\end{remark}

Let us the recall the fundamental comparison principle for $psh$ functions due to Bedford-Taylor \cite[Thm. 4.1]{BT82}:

\begin{theorem} \label{BTcomp}
Let $\Omega$ be a bounded open set in $\mathbb{C}^n$ and $u$ and $v$ two locally bounded $psh$ functions on $U$ such that $\liminf_{z\to \partial \Omega}u(z)-v(z)\geq 0$ (i.e. $u\geq v$ on $\partial \Omega$). Then $$\int_{\{u<v\}}MA(v)\leq \int_{\{u<v\}}MA(u).$$ 
\end{theorem}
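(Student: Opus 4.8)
The plan is to run the classical Bedford--Taylor argument. I would first reduce to integrating over the sets $\Omega_\epsilon:=\{u<v-\epsilon\}$, $\epsilon>0$: the hypothesis $\liminf_{z\to\partial\Omega}(u-v)\ge 0$ supplies, for each $\epsilon$, a neighbourhood of $\partial\Omega$ on which $u-v>-\epsilon$, so $\{u\le v-\epsilon\}$ is a closed subset of $\Omega$ bounded away from $\partial\Omega$, hence $\{u\le v-\epsilon\}\Subset\Omega$ and in particular $\Omega_\epsilon\Subset\Omega$. Since $\Omega_\epsilon$ increases to $\{u<v\}$ as $\epsilon\downarrow0$, monotone convergence reduces the theorem to proving, for each fixed $\epsilon>0$,
\begin{equation*}
\int_{\Omega_\epsilon}(dd^c v)^n\ \le\ \int_{\Omega_\epsilon}(dd^c u)^n .
\end{equation*}

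For this I would introduce the competitor $\tilde u:=\max(u,v-\epsilon)$. Put $K:=\{u\le v-\epsilon\}\Subset\Omega$ and choose an open $\Omega'$ with $K\Subset\Omega'\Subset\Omega$, so that $u$ and $v$ (hence $\tilde u$) are bounded psh on $\Omega'$ and $\tilde u=u$ on $\Omega'\setminus K$, in particular near $\partial\Omega'$. Then: by locality of the Bedford--Taylor Monge--Amp\`ere operator, $(dd^c\tilde u)^n=(dd^c v)^n$ on the open set $\Omega_\epsilon$; on $\{u\ge v-\epsilon\}$ one has $(dd^c\tilde u)^n\ge(dd^c u)^n$, with equality on the open part $\{u>v-\epsilon\}$ (where $\tilde u=u$) and $\ge$ on $\{u=v-\epsilon\}$ by the standard inequality for the Monge--Amp\`ere of a maximum; and, since $\tilde u$ and $u$ are bounded psh on $\Omega'$ and agree near $\partial\Omega'$, integration by parts applied to the telescoping identity $(dd^c\tilde u)^n-(dd^c u)^n=dd^c(\tilde u-u)\wedge\sum_{k=0}^{n-1}(dd^c\tilde u)^k\wedge(dd^c u)^{n-1-k}$, whose last factor is closed, yields $\int_{\Omega'}(dd^c\tilde u)^n=\int_{\Omega'}(dd^c u)^n$. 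Splitting each integral over $\Omega_\epsilon$ and $\Omega'\setminus\Omega_\epsilon$ and inserting the first two observations gives
\begin{equation*}
\int_{\Omega_\epsilon}(dd^c v)^n-\int_{\Omega_\epsilon}(dd^c u)^n=\int_{\Omega'\setminus\Omega_\epsilon}\bigl[(dd^c u)^n-(dd^c\tilde u)^n\bigr]\ \le\ 0 ,
\end{equation*}
which is the desired inequality; letting $\epsilon\downarrow0$ then finishes the proof.

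The one genuinely delicate point is that the boundary hypothesis must be spent to make the set $\{\tilde u\ne u\}$ relatively compact in $\Omega$: this is what legitimises the integration-by-parts step with no boundary term, and it is also what guarantees that the Monge--Amp\`ere masses occurring in the argument are finite. The remaining ingredients --- locality of the Monge--Amp\`ere operator for bounded psh functions, the inequality $(dd^c\max(u,w))^n\ge\mathbbm{1}_{\{u\ge w\}}(dd^c u)^n+\mathbbm{1}_{\{u<w\}}(dd^c w)^n$, and integration by parts for a difference of bounded psh functions against a closed positive current --- are all standard Bedford--Taylor facts, so I do not expect them to cause trouble.
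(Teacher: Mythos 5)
This theorem is not proved in the paper at all: it is recalled as background and attributed to Bedford--Taylor \cite[Thm. 4.1]{BT82}, so there is no internal argument to compare yours with. Your proposal is the standard modern proof of that classical result and is essentially correct: reduce to $\Omega_\epsilon=\{u<v-\epsilon\}$, introduce the competitor $\tilde u=\max(u,v-\epsilon)$, use equality of total masses $\int_{\Omega'}(dd^c\tilde u)^n=\int_{\Omega'}(dd^cu)^n$ for bounded psh functions agreeing off a compact subset of $\Omega'$ (your telescoping/Stokes justification is the right one, and choosing $\Omega'\Subset\Omega$ keeps all masses finite by Chern--Levine--Nirenberg), then conclude with the maximum inequality and let $\epsilon\downarrow 0$. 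This route differs from Bedford--Taylor's original proof, which first treats continuous $u,v$ and then passes to bounded psh functions via quasicontinuity; your version is the shorter one usually presented nowadays. Two small inaccuracies should be repaired, though neither is fatal: since $u-v$ is a difference of two usc functions, $\{u\le v-\epsilon\}$ need not be closed and $\Omega_\epsilon$ need not be (Euclidean) open; the hypothesis should be used in the form that there is a compact $K_\epsilon\subset\Omega$ with $u-v>-\epsilon$ on $\Omega\setminus K_\epsilon$, whence $\{u\le v-\epsilon\}\subset K_\epsilon$ and one may choose $K_\epsilon\subset\Omega'\Subset\Omega$. Accordingly, the identity $(dd^c\tilde u)^n=(dd^cv)^n$ on $\Omega_\epsilon$ is locality with respect to the plurifine topology \cite{BT87} rather than ordinary locality on open sets; alternatively you can avoid it entirely, since the one-sided inequality you already quote, $(dd^c\max(u,w))^n\ge\mathbbm{1}_{\{u\ge w\}}(dd^cu)^n+\mathbbm{1}_{\{u<w\}}(dd^cw)^n$ with $w=v-\epsilon$, applied on $\Omega_\epsilon$ and on $\Omega'\setminus\Omega_\epsilon$ and combined with the mass equality, gives $\int_{\Omega_\epsilon}(dd^cv)^n\le\int_{\Omega_\epsilon}(dd^cu)^n$ directly after cancelling the finite term $\int_{\Omega'\setminus\Omega_\epsilon}(dd^cu)^n$.
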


As was observed in \cite{BEGZ}, Theorem \ref{compthm} implies an analogous comparison principle for $\theta$-psh functions:

\begin{corollary} \label{corcomp}
Let $\phi$ and $\psi$ be $\theta$-psh and assume $\phi$ is less singular than $\psi$. Then $$\int_{\{\phi<\psi\}}MA_{\theta}(\psi)\leq \int_{\{\phi<\psi\}}MA_{\theta}(\phi).$$
\end{corollary}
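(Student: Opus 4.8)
The plan is to deduce Corollary~\ref{corcomp} from Theorem~\ref{compthm} by the standard device of applying the monotonicity inequality to a well-chosen pair of $\theta$-psh functions built from $\phi$ and $\psi$. First I would fix a constant and, for $\varepsilon>0$, consider the function $\psi_\varepsilon:=\max(\phi,\psi-\varepsilon)$. Since $\phi$ and $\psi$ are $\theta$-psh, so is $\psi_\varepsilon$; moreover $\psi_\varepsilon\geq\phi$ everywhere, so $\phi$ is less singular than $\psi_\varepsilon$ (indeed $\phi\leq\psi_\varepsilon\leq\phi+C$ wherever $\psi\leq\phi+C$, and trivially $\psi_\varepsilon\geq\phi$ always), and also $\psi_\varepsilon$ is less singular than $\psi$ since $\psi_\varepsilon\geq\psi-\varepsilon$. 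The key point is the locality of the non-pluripolar product in the plurifine topology (established by Boucksom--Eyssidieux--Guedj--Zeriahi): on the plurifine-open set $\{\phi<\psi-\varepsilon\}$ we have $\psi_\varepsilon=\psi-\varepsilon$, hence $MA_\theta(\psi_\varepsilon)=MA_\theta(\psi-\varepsilon)=MA_\theta(\psi)$ there, while on the plurifine-open set $\{\phi>\psi-\varepsilon\}$ we have $\psi_\varepsilon=\phi$, hence $MA_\theta(\psi_\varepsilon)=MA_\theta(\phi)$ there; the remaining set $\{\phi=\psi-\varepsilon\}$ will be handled by noting it carries no mass of the relevant measures after a harmless perturbation of $\varepsilon$ (only countably many values of $\varepsilon$ can be charged).

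Granting that, I would write
\begin{equation*}
\int_X MA_\theta(\psi_\varepsilon)=\int_{\{\phi>\psi-\varepsilon\}}MA_\theta(\phi)+\int_{\{\phi<\psi-\varepsilon\}}MA_\theta(\psi)
\end{equation*}
for all but countably many $\varepsilon$, and similarly, applying the same decomposition to $MA_\theta(\phi)$ along the same plurifine partition,
\begin{equation*}
\int_X MA_\theta(\phi)=\int_{\{\phi>\psi-\varepsilon\}}MA_\theta(\phi)+\int_{\{\phi<\psi-\varepsilon\}}MA_\theta(\phi).
\end{equation*}
By Theorem~\ref{compthm}, since $\phi$ is less singular than $\psi_\varepsilon$, we have $\int_X MA_\theta(\phi)\geq\int_X MA_\theta(\psi_\varepsilon)$. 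Subtracting the common term $\int_{\{\phi>\psi-\varepsilon\}}MA_\theta(\phi)$ from both sides of this inequality and using the two displayed identities yields
\begin{equation*}
\int_{\{\phi<\psi-\varepsilon\}}MA_\theta(\psi)\leq\int_{\{\phi<\psi-\varepsilon\}}MA_\theta(\phi).
\end{equation*}

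Finally I would let $\varepsilon\to 0^+$ along a sequence avoiding the exceptional values. The sets $\{\phi<\psi-\varepsilon\}$ increase to $\{\phi<\psi\}$ as $\varepsilon\downarrow 0$, so by monotone convergence the left-hand side tends to $\int_{\{\phi<\psi\}}MA_\theta(\psi)$ and the right-hand side tends to $\int_{\{\phi<\psi\}}MA_\theta(\phi)$, giving the claimed inequality. The main obstacle — really the only subtle point — is the careful use of the plurifine-locality of the non-pluripolar Monge--Amp\`ere measure to justify the decomposition of $\int_X MA_\theta(\psi_\varepsilon)$ and of $\int_X MA_\theta(\phi)$ along the partition $\{\phi>\psi-\varepsilon\}\sqcup\{\phi=\psi-\varepsilon\}\sqcup\{\phi<\psi-\varepsilon\}$, together with the observation that the middle set is $MA_\theta(\psi_\varepsilon)$- and $MA_\theta(\phi)$-negligible for all but countably many $\varepsilon$; everything else is a direct application of Theorem~\ref{compthm} and elementary measure theory.
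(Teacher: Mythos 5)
Your argument is essentially the paper's own proof: the paper also sets $\phi_\epsilon:=\max(\phi,\psi-\epsilon)$, applies Theorem~\ref{compthm} to compare $\int_X MA_\theta(\phi_\epsilon)$ with $\int_X MA_\theta(\phi)$, uses plurifine locality on the two plurifine open sets $\{\phi<\psi-\epsilon\}$ and $\{\phi>\psi-\epsilon\}$, and lets $\epsilon\to 0$. The one step I would not let stand as written is the claim that the contact set $\{\phi=\psi-\epsilon\}$ is $MA_\theta(\psi_\epsilon)$-negligible for all but countably many $\epsilon$. The countability trick (disjoint level sets can carry positive mass only countably often) is valid for the \emph{fixed} measure $MA_\theta(\phi)$, but not for the family $MA_\theta(\psi_\epsilon)$, whose measure changes with $\epsilon$; in fact $MA$ of a maximum typically concentrates exactly on the contact set for \emph{every} $\epsilon$ (in one variable, $MA(\max(\tfrac12\log|z|,\log|z|-\epsilon))$ has a component which is the uniform measure on the circle $\{\tfrac12\log|z|=\log|z|-\epsilon\}$ for each $\epsilon$). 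Fortunately you never need the exact three-set decomposition of $\int_X MA_\theta(\psi_\epsilon)$: since the two sets are disjoint, the trivial inequality
\begin{equation*}
\int_X MA_\theta(\psi_\epsilon)\;\geq\;\int_{\{\phi>\psi-\epsilon\}}MA_\theta(\psi_\epsilon)+\int_{\{\phi<\psi-\epsilon\}}MA_\theta(\psi_\epsilon)
=\int_{\{\phi>\psi-\epsilon\}}MA_\theta(\phi)+\int_{\{\phi<\psi-\epsilon\}}MA_\theta(\psi)
\end{equation*}
suffices, and for $MA_\theta(\phi)$ you can simply write $\int_{\{\phi>\psi-\epsilon\}}MA_\theta(\phi)=\int_X MA_\theta(\phi)-\int_{\{\phi\leq\psi-\epsilon\}}MA_\theta(\phi)$, which yields $\int_{\{\phi<\psi-\epsilon\}}MA_\theta(\psi)\leq\int_{\{\phi\leq\psi-\epsilon\}}MA_\theta(\phi)\leq\int_{\{\phi<\psi\}}MA_\theta(\phi)$ for every $\epsilon$, with no exceptional values to avoid; letting $\epsilon\downarrow 0$ then concludes exactly as you do. This is precisely how the paper (following BEGZ) phrases it, and with that adjustment your proof is correct.
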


In order to prove Theorem \ref{compthm} we will, given $\theta$, $\phi$ and $\psi$, construct a related form $\tilde{\theta}$ together with $\tilde{\theta}$-psh functions $\Phi$ and $\Psi$ on $X\times \mathbb{P}^N$ for large $N$. By construction $\Phi$ will be less singular than $\Psi$ and they will also have small unbounded locus so we know from \cite[Thm 1.16]{BEGZ} that 
\begin{equation} \label{introeq}
\int_{X\times \mathbb{P}^N}MA_{\tilde{\theta}}(\Phi)\geq \int_{X\times \mathbb{P}^N}MA_{\tilde{\theta}}(\Psi).
\end{equation} 
We will then establish a formula for the Monge-Amp\`ere masses of $\Phi$ and $\Psi$ involving the Monge-Amp\`ere masses of $\phi$ and $\psi$ (Prop. \ref{keyprop}), so that invoking (\ref{introeq}) for larger and larger $N$ yields the desired inequality (\ref{introineq}).    

\subsection{Related work}

In \cite{Darvas} Darvas proved that if $\theta$ is K\"ahler then a $\theta$-psh function $\psi$ has full Monge-Amp\`ere mass (i.e. $\int_X MA_{\theta}(\psi)=\int_X\theta^n$) iff whenever $\phi$ is $\theta$-psh and locally bounded we have that $P_{[\psi]}(\phi)=\phi$. Here $P_{[\psi]}(\phi)$ is a certain kind of envelope introduced in \cite{RWN}, defined as the usc regularization of the supremum of all $\theta$-psh functions $\phi'$ such that $\phi'\leq \phi$ and $\phi'\leq \psi+O(1)$. Recently Darvas-Di Nezza-Lu generalized this result to the case when $[\theta]$ is just big. 

These results have had important applications to the study of the geometry of the space of full mass currents, but are not in an obvious way connected to the monotonicity on the Monge-Amp\`ere masses. However, interestingly their proof uses so called geodesic rays constructed from the $\theta$-psh function $\psi$. A geodesic ray can be thought of as a $\pi_X^*\theta$-psh function on $X\times \mathbb{D}$, where $\mathbb{D}$ is the unit disc, and one could also extend it to a $\tilde{\theta}:=\pi_X^*\theta+\pi_{\mathbb{P}^1}\omega_{FS}$-psh function on $X\times \mathbb{P}^1$. This is similar to our construction of a $\tilde{\theta}$-psh functions $\Phi$ on $X\times \mathbb{P}^N$ when $N=1$. Note though that in contrast to our proof the methods in \cite{Darvas,DDL} do not rely on the calculation of the Monge-Amp\`ere mass of the geodesic ray, as this is automatically zero. Nevertheless our construction was initially inspired by these papers \cite{Darvas, DDL} and it would be very interesting to know if possibly there are more links between the results.

\subsection{Acknowledgements}

I want to thank Bo Berndtsson, Antonio Trusiani and especially Tam\'as Darvas for many helpful comments on an early draft. 

\section{Preliminaries} \label{Prel}

Let $(X,\omega)$ be a compact K\"ahler manifold and $\theta$ be a smooth real $(1,1)$-form on $X$. Recall from the introduction that a function $\phi:X\to [-\infty,\infty)$ is said to be $\theta$-psh if it is upper semicontinuous, $L^1_{loc}$ and $\theta+dd^c\phi\geq 0$ in the sense of currents, and that by the $dd^c$-lemma any closed positive $(1,1)$-current $T$ in the class $[\theta]$ can be written as $T=\theta+dd^c\phi$ where $\phi$ is $\theta$-psh.

The set of $\theta$-psh functions is denoted by $PSH(X,\theta)$. The cohomology class $[\theta]$ is called pseudoeffective if it contains a closed positive current, i.e. if $PSH(X,\theta)$ is nonempty, while $[\theta]$ is said to be big if for some $\epsilon>0$, $[\theta-\epsilon \omega]$ is pseudoeffective.

A $\theta$-psh function is said to have analytic singularities if locally it can be written as $c\ln(\sum_i |g_i|^2)+f$ where $c>0,$ $g_i$ is a finite collection of local holomorphic functions and $f$ is smooth. By a deep regularization result of Demailly \cite{Dem}, if $[\theta]$ is big then there exists a $\theta$-psh function with analytic singularities. Since any proper analytic subset is closed and complete pluripolar we note that if $\phi$ is less singular than some function with analytic singularities then $\phi$ has small unbounded locus. 

A $\theta$-psh function is said to have minimal singularities if it is less singular than all other $\theta$-psh functions.

We now come to the notion of non-pluripolar positive products of closed positive currents. This theory was first developed in the local setting by Bedford-Taylor \cite{BT82,BT87} and later in the geometric setting of compact K\"ahler manifolds by Boucksom-Eyssidieux-Guedj-Zeriahi \cite{BEGZ}.

If $T_i$, $i=\{1,...,p\}$ are closed positive $(1,1)$-currents there is a closed positive $(p,p)$-current $\langle T_1\wedge ...\wedge T_p\rangle$ called the non-pluripolar positive product of $T_i$ (see \cite{BEGZ} for the definition). The product is symmetric and multilinear \cite[Prop. 1.4(c)]{BEGZ})). Importantly non-pluripolar products never puts mass on pluripolar sets.

When $p=n=\dim_{\mathbb{C}} X$, $\langle T_1\wedge ...\wedge T_n\rangle$ is a positive measure, and when the $n$ currents are all equal $T_i=\theta+dd^c\psi$, then $\langle (\theta+dd^c\psi)^n\rangle$ is known as the (non-pluripolar) Monge-Amp\`ere measure of $\psi$, which we also denote by $MA_{\theta}(\psi)$. From the symmetry and multilinearity of the non-pluripolar product we get that if $\phi$ and $\psi$ are $\theta$-psh the measure $MA_{\theta}((1-t)\phi+t\psi)$ depends continuously on $t\in[0,1]$.

A basic fact is that if on some upen set $U$ $\psi$ is $C^{1,1}$ (or more generally if $dd^c\psi$ has coefficients in $L^{\infty}$) then 
\begin{equation} \label{explicit}
\mathbbm{1}_U MA_{\theta}(\psi)=\mathbbm{1}_U (\theta+dd^c u)^n.
\end{equation}
Here the right hand side simply denotes the measure one gets by taking the appropriate determinant of the coefficient functions. 

The following convergence result for Monge-Amp\`ere measures by Bedford-Taylor \cite[Thm. 2.1]{BT82} is absolutely fundamental.

\begin{theorem} \label{BTthm}
Let $U$ be an open set and $u_k$ be a decreasing sequence of $\theta$-psh functions such that $u:=\lim_{k\to \infty }u_k$ is locally bounded on $U$ ($u$ will then automatically by $\theta$-psh on $U$). Then on $U$ the measures $MA_{\theta}(u_k)$ converge weakly to $MA_{\theta}(u)$.
\end{theorem}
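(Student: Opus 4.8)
The plan is to note that the statement is local on $U$, reduce it to ordinary plurisubharmonic functions on a ball in $\mathbb{C}^n$, and then argue by induction on the number of Monge-Amp\`ere factors, the inductive engine being the weak continuity of $w_k R_k$ when $w_k$ decreases and $R_k$ converges weakly.

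\emph{Localization.} I would cover $U$ by coordinate balls $B$ on which $\theta = dd^c h$ for a smooth local potential $h$. Then $v_k := u_k + h$ are bounded plurisubharmonic functions on $B$, decreasing to $v := u + h$, and, the non-pluripolar product being local and agreeing with the Bedford-Taylor product for bounded potentials, $MA_\theta(u_k)$ equals the Bedford-Taylor measure $(dd^c v_k)^n$ on $B$. Since $v \le v_k \le v_1$ with $v$ locally bounded and $v_1$ locally bounded above, after shrinking $B$ we may assume $\sup_k \sup_B |v_k| < \infty$. So it suffices to prove: for a uniformly bounded decreasing sequence $v_k \downarrow v$ of psh functions on a ball, $(dd^c v_k)^n \to (dd^c v)^n$ weakly.

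\emph{Induction.} For $0 \le p \le n$ I would prove, by induction on $p$, two assertions. $(A_p)$: for all uniformly bounded decreasing sequences $v^{(j)}_k \downarrow v^{(j)}$ of psh functions, $1 \le j \le p$, the currents $dd^c v^{(1)}_k \wedge \cdots \wedge dd^c v^{(p)}_k$ converge weakly to $dd^c v^{(1)} \wedge \cdots \wedge dd^c v^{(p)}$. $(B_p)$: the same, after multiplying by an extra bounded psh potential factor $v^{(0)}_k \downarrow v^{(0)}$. Here $(A_0)$ is vacuous and $(B_0)$ is the $L^1_{loc}$-convergence $v^{(0)}_k \to v^{(0)}$, which holds by dominated convergence since the sequence is decreasing and uniformly bounded. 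Given $(B_{p-1})$, assertion $(A_p)$ is formal: with $S_k := dd^c v^{(2)}_k \wedge \cdots \wedge dd^c v^{(p)}_k$, $(B_{p-1})$ yields $v^{(1)}_k S_k \to v^{(1)} S$ weakly, and since $S_k$ is closed, $dd^c v^{(1)}_k \wedge S_k = dd^c(v^{(1)}_k S_k) \to dd^c(v^{(1)} S) = dd^c v^{(1)} \wedge S$ by weak continuity of $dd^c$; taking all $n$ factors equal gives the theorem. Thus the only nontrivial implication is from $(A_p)$ to $(B_p)$, which amounts to the following.

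\emph{Key Lemma.} If $R_k \to R$ weakly with $R_k$ closed positive currents, and $w_k \downarrow w$ is a uniformly bounded decreasing sequence of psh functions, then $w_k R_k \to w R$ weakly. Testing against a smooth $\chi \ge 0$ with compact support, the bound $\limsup_k \langle w_k R_k, \chi\rangle \le \langle w R, \chi\rangle$ is the easy half: for $k \ge m$, $w_k \le w_m \le w_m \ast \rho_\varepsilon$ for a standard mollifier $\rho_\varepsilon$, hence $\langle w_k R_k, \chi\rangle \le \langle (w_m \ast \rho_\varepsilon) R_k, \chi\rangle \to \langle (w_m \ast \rho_\varepsilon) R, \chi\rangle$ as $k \to \infty$ since $(w_m \ast \rho_\varepsilon)\chi$ is smooth; now let $\varepsilon \to 0$ and then $m \to \infty$, using monotone convergence against the positive measure determined by $R$ and $\chi$. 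The reverse inequality $\liminf_k \langle w_k R_k, \chi\rangle \ge \langle w R, \chi\rangle$ is the heart of the matter: by the Chern-Levine-Nirenberg inequality the masses of the $R_k$ are locally uniformly bounded, so the measures $w_k R_k$ are weakly precompact, and it suffices to show that any weak limit $\nu$ equals $w R$; the easy half already gives $\nu \le w R$, so one needs only $\nu \ge w R$. This last inequality is a lower-semicontinuity property of the mixed Monge-Amp\`ere mass under simultaneous weak convergence of the currents and decreasing convergence of the potential, and I expect it to be the main obstacle; following Bedford-Taylor, I would prove it by mollifying one potential, integrating by parts, and estimating the resulting zero- and first-order error terms again via Chern-Levine-Nirenberg. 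Everything apart from this lower bound is formal.
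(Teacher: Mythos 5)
A preliminary remark: the paper contains no proof of this statement at all --- Theorem \ref{BTthm} is quoted as Bedford--Taylor's convergence theorem \cite[Thm.\ 2.1]{BT82} and used as a black box --- so the only question is whether your argument would actually establish it. Your reduction (localize to a coordinate ball, pass to bounded psh potentials $v_k=u_k+h$, induct on the number of factors using $dd^c v^{(1)}_k\wedge S_k=dd^c(v^{(1)}_kS_k)$ and weak continuity of $dd^c$) is the standard skeleton, and the upper-bound half of your Key Lemma is fine. The genuine gap is exactly where you say ``the heart of the matter'' is: the lower bound is not proved, and worse, the Key Lemma as you state it is \emph{false} in that generality, so no amount of Chern--Levine--Nirenberg estimates, mollification and integration by parts can close it. Counterexample in dimension one: on the unit disc let $u(z):=\sum_{j\ge 1}4^{-j}\log|z-1/j|$ and $w:=\max(u,-1)$, a bounded psh function with $w(1/j)=-1$ for every $j$ while $w(0)>-1/2$; take the constant (hence decreasing) sequence $w_k:=w$ and $R_k:=\delta_{1/k}$, which are closed positive $(1,1)$-currents (each is $dd^c$ of a multiple of $\log|z-1/k|$) converging weakly to $R=\delta_0$. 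Then for a cutoff $\chi\equiv 1$ near $0$ one has $\langle w_kR_k,\chi\rangle\to-1$ whereas $\langle wR,\chi\rangle=w(0)>-1/2$, so $w_kR_k\not\to wR$. Note that the masses of the $R_k$ are uniformly bounded here, so CLN-type bounds cannot be the missing ingredient.

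What this shows is that the lower bound must use the specific structure of the currents $S_k$ in your induction, namely that they are mixed Monge--Amp\`ere currents of a \emph{uniformly bounded, monotone} family of psh potentials. The classical ways to exploit this are (i) Bedford--Taylor capacity theory: such measures are uniformly dominated by the Monge--Amp\`ere capacity, and psh functions are quasicontinuous with respect to capacity, which gives $wS_k\to wS$ and hence $\liminf_k\langle w_kS_k,\chi\rangle\ge\liminf_k\langle wS_k,\chi\rangle=\langle wS,\chi\rangle$ (using $w_k\ge w$); or (ii) Bedford--Taylor's original integration-by-parts scheme, which crucially modifies all potentials so as to agree with a fixed smooth function near the boundary of the ball and uses the symmetry of $\int u\,dd^c v\wedge T$ together with monotonicity. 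Either route is a substantial argument (it is the actual content of \cite[Thm.\ 2.1]{BT82}), and your proposal currently replaces it with an expectation; as written the proof is incomplete, and the stated Key Lemma must in any case be weakened to currents with uniform capacity control before it can be true.
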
 

Recall from \cite{BT87,BEGZ} that the plurifine topology is the one generalated by sets of the form $U\cap \{v>0\}$ where $U$ is open and $v$ is some psh function on $U$. Monge-Amp\`ere measures are local in the plurifine topology \cite[Prop. 1.4(a)]{BEGZ}, i.e. if $\phi=\psi$ on a plurifine open set $O$ then $\mathbbm{1}_OMA_{\theta}(\phi)=\mathbbm{1}_OMA_{\theta}(\psi)$ .

Let us state the full version of monotonicity result of Boucksom-Eyssidieux-Guedj-Zeriahi from \cite[Thm. 1.16]{BEGZ} mentioned in the Introduction.

\begin{theorem} \label{BEGZthm}
Assume we have two $p$-tuples of currents $T_i=\theta_i+dd^c\phi_i$ and $T_i'=\theta_i+dd^c\psi_i$ such that for each $i$ $\phi_i$ is less singular than $\psi_i$, and furthermore each $\phi_i$ and $\psi_i$ has small unbouded locus. Then the cohomology class $$\langle T_1\wedge ...\wedge T_p \rangle-\langle T_1'\wedge ...\wedge T_p' \rangle$$ is pseudoeffective, i.e. contains a closed positive $(p,p)$-current. In the particular case that $p=n$, $\theta_i=\theta$, $\phi_i=\phi$ and $\psi_i=\psi$ this means precisely that $$\int_X MA_{\theta}(\phi)\geq \int_XMA(\psi).$$
\end{theorem}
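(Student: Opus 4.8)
The plan is to reduce, via multilinearity of the non-pluripolar product, to the case where only one of the $p$ factors varies, and then to control that single factor by comparing its non-pluripolar product with those of its ``canonical approximants'' $\max(\psi_1,\phi_1-k)$.

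For the reduction, set $a_j:=\langle T_1\wedge\cdots\wedge T_j\wedge T_{j+1}'\wedge\cdots\wedge T_p'\rangle$ for $0\le j\le p$, so that $a_0=\langle T_1'\wedge\cdots\wedge T_p'\rangle$ and $a_p=\langle T_1\wedge\cdots\wedge T_p\rangle$, and use the telescoping identity $a_p-a_0=\sum_{j=1}^p(a_j-a_{j-1})$. Each summand $a_j-a_{j-1}$ is a difference of two non-pluripolar products agreeing in every slot except the $j$-th, where they read $\theta_j+dd^c\phi_j$ respectively $\theta_j+dd^c\psi_j$, while all the remaining (fixed) factors still have small unbounded locus. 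Since a sum of pseudoeffective classes is pseudoeffective, it suffices to treat the case $\phi_i=\psi_i$ for $i\neq 1$. Replacing $\psi_1$ by $\psi_1+C$ (which does not alter $T_1'$) I may moreover assume $\phi_1\ge\psi_1$. Write $\phi:=\phi_1$, $\psi:=\psi_1$, $\vartheta:=\theta_1$, keep $T_2,\dots,T_p$ fixed, and fix a closed complete pluripolar set $A\subset X$ off which $\phi$, $\psi$ and all the potentials of $T_2,\dots,T_p$ are locally bounded. It remains to prove
\[
\{\langle(\vartheta+dd^c\phi)\wedge T_2\wedge\cdots\wedge T_p\rangle\}\ \geq\ \{\langle(\vartheta+dd^c\psi)\wedge T_2\wedge\cdots\wedge T_p\rangle\}.
\]

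Now introduce, for $k\ge 0$, the approximants $\psi_k:=\max(\psi,\phi-k)$, which are $\vartheta$-psh, decrease to $\psi$, and satisfy $\phi-k\le\psi_k\le\phi$; in particular each $\psi_k$ is equisingular to $\phi$ and has small unbounded locus. Put $R_k:=\langle(\vartheta+dd^c\psi_k)\wedge T_2\wedge\cdots\wedge T_p\rangle$, $R_\phi:=\langle(\vartheta+dd^c\phi)\wedge T_2\wedge\cdots\wedge T_p\rangle$ and $R_\psi:=\langle(\vartheta+dd^c\psi)\wedge T_2\wedge\cdots\wedge T_p\rangle$. The crucial claim is that \emph{equisingular potentials produce cohomologous non-pluripolar products}, so that $\{R_k\}=\{R_\phi\}$ for all $k$. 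Granting this, the $R_k$ are closed positive $(p,p)$-currents all in the class $\{R_\phi\}$, hence of constant mass $\{R_\phi\}\cdot[\omega]^{n-p}$; a subsequence thus converges weakly to a closed positive current $R_\infty$, which lies again in $\{R_\phi\}$ since the map assigning a cohomology class to a $d$-closed current is weakly continuous. On $X\setminus A$ all potentials are locally bounded and $\psi_k\downarrow\psi$, so by the Bedford--Taylor monotone convergence theorem (the mixed-product form of Theorem \ref{BTthm}) $R_k\to R_\psi$ weakly on $X\setminus A$, whence $R_\infty$ and $R_\psi$ coincide there. Because $R_\psi$, being a non-pluripolar product, puts no mass on the pluripolar set $A$, this forces $R_\psi\le R_\infty$ as positive currents on $X$. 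Then $R_\infty-R_\psi$ is a closed positive $(p,p)$-current representing $\{R_\phi\}-\{R_\psi\}$, so the latter is pseudoeffective; this is the reduced statement, and taking $p=n$, $\theta_i=\theta$, $\phi_i=\phi$, $\psi_i=\psi$ and integrating yields $\int_X MA_\theta(\phi)\ge\int_X MA_\theta(\psi)$.

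The main obstacle is the crucial claim, and it is here that the small-unbounded-locus hypothesis is essential. Given $\vartheta$-psh functions $u,v$ with $|u-v|\le C$, both locally bounded off $A$: there the relevant products are Bedford--Taylor products, so $\langle(\vartheta+dd^cu)\wedge T_2\wedge\cdots\rangle-\langle(\vartheta+dd^cv)\wedge T_2\wedge\cdots\rangle=dd^c(u-v)\wedge T_2\wedge\cdots\wedge T_p$ on $X\setminus A$, with $u-v$ bounded. Using completeness of $A$, pick a quasi-psh $\rho$ with $A=\{\rho=-\infty\}$ and cutoff functions $\chi_\varepsilon(\rho)\nearrow 1$; carrying out the same integration-by-parts computation as in the construction of non-pluripolar products in \cite{BEGZ}, the error terms carrying a factor $d\chi_\varepsilon(\rho)$ vanish in the limit because $u-v$ is bounded and $T_2\wedge\cdots\wedge T_p$ carries no mass near $A$. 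Hence $dd^c(u-v)\wedge T_2\wedge\cdots\wedge T_p$, extended by zero across $A$, equals the globally exact current $dd^c\big((u-v)\,[T_2\wedge\cdots\wedge T_p]\big)$, so the two non-pluripolar products (neither charging $A$) differ by a $dd^c$-exact current and are cohomologous. Applying this with $u=\psi_k$ and $v=\phi$ establishes $\{R_k\}=\{R_\phi\}$, completing the proof.
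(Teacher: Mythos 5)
This theorem is not proved in the paper at all: it is quoted from \cite{BEGZ} (their Thm.\ 1.16) as a known ingredient, so there is no internal proof to compare against. Your argument is essentially a correct reconstruction of the original Boucksom--Eyssidieux--Guedj--Zeriahi proof: the equisingular approximants $\max(\psi,\phi-k)$, the fact that potentials with the same singularity type and small unbounded locus give cohomologous non-pluripolar products (their Thm.\ 1.14), weak compactness of the approximating currents within a fixed class, Bedford--Taylor convergence off the complete pluripolar set, and the fact that non-pluripolar products put no mass on that set. The differences are minor: your telescoping reduction to one slot at a time is harmless but unnecessary (BEGZ approximate all slots simultaneously with $\max(\psi_i,\phi_i-k)$), and your ``crucial claim'' is exactly their Thm.\ 1.14, whose Skoda--El Mir-type cutoff argument you only sketch --- there the error terms involving $d\chi_\varepsilon(\rho)$ are killed by boundedness of $u-v$ together with locally finite masses near $A$ and Cauchy--Schwarz, not because the product ``carries no mass near $A$'' (it puts no mass \emph{on} $A$, but may well have mass near it).
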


Boucksom-Eyssidieux-Guedj-Zeriahi also proved a comparison principle \cite[Cor. 2.3]{BEGZ}:

\begin{theorem}
For any two $\theta$-psh functions $\phi$ and $\psi$ we have $$\int_{\{\phi<\psi\}}MA_{\theta}(\psi)\leq \int_{\{\phi<\psi\}}MA_{\theta}(\phi)+\textrm{vol}([\theta])-\int_XMA_{\theta}(\phi).$$ If $\phi$ has minimal singularities then $\int_X MA_{\theta}(\phi)=\textrm{vol}([\theta])$ and thus $$\int_{\{\phi<\psi\}}MA_{\theta}(\psi)\leq \int_{\{\phi<\psi\}}MA_{\theta}(\phi).$$
\end{theorem}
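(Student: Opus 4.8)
The plan is to reduce Theorem~\ref{compthm} to the known small-unbounded-locus case (Theorem~\ref{BEGZthm}) by a product construction that regularizes the unbounded locus at the cost of raising the dimension. Given $\theta$, $\phi$ and $\psi$ on $X$, I would work on $Y_N:=X\times\mathbb{P}^N$ with $\tilde\theta:=\pi_X^*\theta+\pi_{\mathbb{P}^N}^*\omega_{FS}$, and build $\tilde\theta$-psh functions $\Phi$ and $\Psi$ on $Y_N$ out of $\phi$ and $\psi$. The defining feature must be that, on the one hand, $\Phi$ and $\Psi$ have small unbounded locus (so Theorem~\ref{BEGZthm} applies and gives $\int_{Y_N}MA_{\tilde\theta}(\Phi)\ge\int_{Y_N}MA_{\tilde\theta}(\Psi)$), and $\Phi\ge\Psi+O(1)$ follows automatically from $\phi\ge\psi+O(1)$; and, on the other hand, one can compute these higher-dimensional masses in terms of the masses of $\phi$ and $\psi$. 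The natural candidate is a ``Legendre-type'' or max-with-linear-pieces construction: identify a chart of $\mathbb{P}^N$ with $\mathbb{C}^N$, and on $X\times\mathbb{C}^N$ set $\Phi(x,w):=$ an $\omega_{FS}$-compatible expression that in suitable coordinates looks like $\max_j\bigl(\phi(x)+\text{linear}_j(w)\bigr)$ regularized via the Fubini--Study potential, so that the singular locus of $\Phi$ in the $x$-direction gets ``spread out'' and averaged against the smooth $\mathbb{P}^N$ factor. Because $\phi$ enters only through a maximum with smooth data, $\Phi$ is locally bounded off $\{\phi=-\infty\}\times\{\text{finitely many hyperplanes}\}$, which has small unbounded locus.

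The computational heart is Proposition~\ref{keyprop}: an exact formula of the shape
\begin{equation*}
\int_{Y_N}MA_{\tilde\theta}(\Phi)=\sum_{k=0}^{n}\binom{N+k}{k}\,a_{n-k}(N)\int_X MA_{\theta}(\phi)\wedge\theta^{\,n-k}\ \text{-type terms},
\end{equation*}
or more realistically a formula of the form $\int_{Y_N}MA_{\tilde\theta}(\Phi)=c_N\int_XMA_\theta(\phi)+R_N$, where $c_N$ is a fixed positive combinatorial constant (an integral of powers of $\omega_{FS}$ over $\mathbb{P}^N$) and $R_N$ is a ``remainder'' built from mixed masses $\int_X MA_\theta^{(j)}(\phi)\wedge\theta^{n-j}$ for $j<n$ together with cohomological data that does \emph{not} depend on $\phi$ versus $\psi$ — i.e. $R_N$ is the same for $\Phi$ and $\Psi$. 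To get such a clean split I would expand $(\tilde\theta+dd^c\Phi)^{n+N}$ on $Y_N$ by the binomial/multinomial theorem using the product structure: the $\mathbb{P}^N$-directions contribute powers of $\omega_{FS}+dd^c(\text{its potential})$, which integrate to explicit constants, while the $X$-directions contribute powers of $\theta+dd^c\phi$; the plurifine locality of Monge--Amp\`ere measures (and the explicit local formula \eqref{explicit} where $\Phi$ is $C^{1,1}$) is what lets me do this computation on the locus where $\Phi$ is of the model max-form and then argue the rest is carried by pluripolar-free bookkeeping. A key sub-point is that the only term in which the genuinely singular mass $\int_X MA_\theta(\phi)$ appears is the top $X$-power term, with coefficient $c_N=\int_{\mathbb{P}^N}\omega_{FS}^N>0$ independent of $\phi$; all lower $X$-powers pair with at least one factor of $\theta$ (not $\theta+dd^c\phi$), and since those mixed non-pluripolar products against a smooth form depend only on cohomology plus strictly-less-singular data, they agree for $\phi$ and $\psi$ after one more application of Theorem~\ref{BEGZthm} in lower degree — or, better, I would choose the construction so the remainder is literally independent of the function.

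Granting Proposition~\ref{keyprop}, the endgame is immediate: subtract the two equalities for $\Phi$ and $\Psi$, cancel the common remainder $R_N$, divide by $c_N>0$, and apply \eqref{introeq} to conclude $\int_X MA_\theta(\phi)\ge\int_X MA_\theta(\psi)$ (letting $N\to\infty$ only if needed to kill an error term of size $o(c_N)$). Corollary~\ref{corcomp} then follows from Theorem~\ref{compthm} exactly as in \cite{BEGZ}: apply the inequality to $\phi$ and $\max(\phi,\psi)$ — which is less singular than $\psi$, agrees with $\psi$ on the plurifine open set $\{\phi<\psi\}$ and with $\phi$ on $\{\phi\ge\psi\}$ — use plurifine locality of $MA_\theta$ on those two sets, and rearrange. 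The main obstacle I anticipate is \emph{proving Proposition~\ref{keyprop} cleanly}: making the higher-dimensional Monge--Amp\`ere expansion rigorous despite $\Phi$ not being globally bounded requires careful use of plurifine locality and the Bedford--Taylor convergence theorem (Theorem~\ref{BTthm}) to localize to regions where $\Phi$ is $C^{1,1}$ and the naive determinant computation \eqref{explicit} is valid, and then checking that the non-pluripolar product puts no mass on the bad set $\{\phi=-\infty\}\times(\text{hyperplanes})$ — which is exactly where the small-unbounded-locus hypothesis on $\Phi,\Psi$ is doing its work. A secondary technical point is verifying that the max-type $\Phi$ is genuinely $\tilde\theta$-psh with the claimed singularities, i.e. that the Fubini--Study gluing across charts of $\mathbb{P}^N$ is consistent.
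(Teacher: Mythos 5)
Your proposal is aimed at the wrong target: what you outline is the paper's own strategy for Theorem \ref{compthm} and Corollary \ref{corcomp} (the product construction on $X\times\mathbb{P}^N$, a mass formula like Proposition \ref{keyprop}, then $N\to\infty$), but the statement at hand is the \emph{general} comparison principle of Boucksom--Eyssidieux--Guedj--Zeriahi, valid for arbitrary $\theta$-psh $\phi,\psi$ with no relative singularity assumption, and carrying the error term $\mathrm{vol}([\theta])-\int_X MA_\theta(\phi)$, together with the observation that minimal singularities give full mass $\int_X MA_\theta(\phi)=\mathrm{vol}([\theta])$. In the paper this result is simply quoted from \cite[Cor. 2.3]{BEGZ}; its proof needs none of your heavy machinery. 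The whole argument is: set $\phi_\epsilon:=\max(\phi,\psi-\epsilon)$, use the basic BEGZ bound $\int_X MA_\theta(\phi_\epsilon)\leq \mathrm{vol}([\theta])$, and by plurifine locality split this mass as at least $\int_{\{\phi<\psi-\epsilon\}}MA_\theta(\psi)+\int_{\{\phi>\psi-\epsilon\}}MA_\theta(\phi)$, rearrange using $\int_{\{\phi>\psi-\epsilon\}}MA_\theta(\phi)\geq \int_X MA_\theta(\phi)-\int_{\{\phi<\psi\}}MA_\theta(\phi)$, and let $\epsilon\to 0$; the final claim is just the definition of $\mathrm{vol}([\theta])$ as the Monge--Amp\`ere mass of a minimal-singularity potential. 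Your scheme never produces the error term, because your endgame presupposes $\phi$ less singular than $\psi$ (so that $\int_X MA_\theta(\max(\phi,\psi))=\int_X MA_\theta(\phi)$), and you never address the full-mass statement for minimal singularities.

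Two further concrete points. First, even for the restricted Corollary \ref{corcomp}, taking $\max(\phi,\psi)$ rather than $\max(\phi,\psi-\epsilon)$ fails: $\{\phi\geq\psi\}$ is not plurifine open, so locality only gives $\int_{\{\phi<\psi\}}MA_\theta(\psi)\leq\int_{\{\phi\leq\psi\}}MA_\theta(\phi)$, and the contact set $\{\phi=\psi\}$ can carry mass; the $\epsilon$-shift (so that $\{\phi\leq\psi-\epsilon\}\subseteq\{\phi<\psi\}$) is exactly what the paper and \cite{BEGZ} use to close this gap. Second, your guessed shape for Proposition \ref{keyprop} --- a clean split $c_N\int_X MA_\theta(\phi)+R_N$ with $R_N$ independent of the function --- is not what the construction yields; the actual formula is $N\int_0^1\bigl(\int_X MA_\theta((1-t)\phi_0+t\phi)\bigr)t^{N-1}dt$, whose ``mixed'' part genuinely depends on $\phi$ and cannot be cancelled against the one for $\psi$; the inequality for each $N$ is kept as is, and the conclusion comes from $N\to\infty$ concentrating the weight $Nt^{N-1}dt$ at $t=1$ via continuity of $t\mapsto\int_X MA_\theta((1-t)\phi_0+t\phi)$. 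But these remarks concern Theorem \ref{compthm}; for the quoted comparison principle itself, the missing idea is the volume bound plus the $\epsilon$-max argument described above.
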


In \cite[Rem. 2.4]{BEGZ} Boucksom-Eyssidieux-Guedj-Zeriahi also noted that when $\phi$ is less singular than $\psi$ and both have small unbounded locus, Theorem \ref{BEGZthm} can be used to show that $$\int_{\{\phi<\psi\}}MA_{\theta}(\psi)\leq \int_{\{\phi<\psi\}}MA_{\theta}(\phi).$$

Let us finally mention the domination principle \cite[Cor. 2.5]{BEGZ}:

\begin{theorem}
Let $\phi$ and $\psi$ be $\theta$-psh. If $\phi$ has minimal singularities and $\psi\leq \phi$ a.e. with respect to $MA_{\theta}(\phi)$ then $\psi\leq \phi$ everywhere.
\end{theorem}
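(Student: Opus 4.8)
The plan is to derive a contradiction from the assumption that the set $E:=\{\phi<\psi\}$ is nonempty, the key being to exploit essentially that $\phi$, having minimal singularities, has full Monge-Amp\`ere mass $\int_X MA_{\theta}(\phi)=\textrm{vol}([\theta])$; I take $[\theta]$ big throughout, so $\textrm{vol}([\theta])>0$. First I would invoke the comparison principle of Boucksom-Eyssidieux-Guedj-Zeriahi recalled above: because $\int_X MA_{\theta}(\phi)=\textrm{vol}([\theta])$ its correction term drops, so $$\int_{\{\phi<\psi\}}MA_{\theta}(\psi)\leq\int_{\{\phi<\psi\}}MA_{\theta}(\phi)=0,$$ the last equality being the assumption that $\psi\leq\phi$ holds $MA_{\theta}(\phi)$-a.e. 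Hence the plurifine-open set $E$ carries no mass for either $MA_{\theta}(\phi)$ or $MA_{\theta}(\psi)$. At this point the comparison principle has nothing further to offer, and the crux is to upgrade ``$E$ is Monge-Amp\`ere null'' to ``$E$ is empty''.

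For this I would bring in $u:=\max(\phi,\psi)$. Since $\phi$ is less singular than $\psi$, one has $\psi\leq\phi+O(1)$, hence $\phi\leq u\leq\phi+O(1)$, so $u$ too has minimal singularities and $\int_X MA_{\theta}(u)=\textrm{vol}([\theta])$. Next I would use the standard inequality for non-pluripolar products $$MA_{\theta}(\max(\phi,\psi))\geq\mathbbm{1}_{\{\psi\leq\phi\}}MA_{\theta}(\phi)+\mathbbm{1}_{\{\psi>\phi\}}MA_{\theta}(\psi),$$ which away from $\{\psi=\phi\}$ is immediate from plurifine locality and on the coincidence set follows from Theorem \ref{BTthm} applied to $\max(\phi,\psi-\epsilon)\downarrow\max(\phi,\psi)$ together with plurifine locality (the unbounded case reducing to the bounded one via the truncation that defines non-pluripolar products). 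Since $E=\{\psi>\phi\}$ is null for both $MA_{\theta}(\phi)$ (by hypothesis) and $MA_{\theta}(\psi)$ (by the first step), the right-hand side above equals $MA_{\theta}(\phi)$ as a measure; hence $MA_{\theta}(u)\geq MA_{\theta}(\phi)$, and as both have total mass $\textrm{vol}([\theta])$ this gives $MA_{\theta}(u)=MA_{\theta}(\phi)$.

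Finally, $u$ and $\phi$ both have minimal singularities and $MA_{\theta}(u)=MA_{\theta}(\phi)$, so $u=\phi+c$ for some constant $c\geq0$ by uniqueness of Monge-Amp\`ere potentials with minimal singularities (the sign because $u\geq\phi$). If $c>0$ then, since $E=\{\psi>\phi\}=\{u>\phi\}$ and $u=\phi+c$, we have $E=\{\phi>-\infty\}$, so $MA_{\theta}(\phi)(E)=\textrm{vol}([\theta])>0$ as $MA_{\theta}(\phi)$ charges no pluripolar set, contradicting $MA_{\theta}(\phi)(E)=0$. Hence $c=0$, so $u=\phi$, i.e.\ $\psi\leq\phi$ everywhere. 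I expect the decisive ingredient to be the last step's appeal to uniqueness of solutions of the Monge-Amp\`ere equation with minimal singularities; this can be replaced by a direct argument, pairing $u-\phi$ against the vanishing signed measure $MA_{\theta}(u)-MA_{\theta}(\phi)$ and integrating by parts to force $d(u-\phi)=0$. More conceptually, the main obstacle is already the point flagged after the first step: the comparison principle by itself gives only that $E$ is Monge-Amp\`ere null, and one must inject the maximal-mass property of minimal singularities through the auxiliary function $u$ to conclude that $E$ is empty.
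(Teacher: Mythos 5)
First, a point of orientation: the paper itself does not prove this statement -- it is recalled verbatim from \cite[Cor. 2.5]{BEGZ} -- so your attempt has to be measured against the original argument of Boucksom--Eyssidieux--Guedj--Zeriahi, which is also the template for the paper's own Corollary \ref{cordom2}. Your first two steps are fine: the comparison principle with $\int_X MA_{\theta}(\phi)=\mathrm{vol}([\theta])$ gives $MA_{\theta}(\psi)(E)=MA_{\theta}(\phi)(E)=0$ for $E=\{\phi<\psi\}$, and the max-inequality for non-pluripolar Monge--Amp\`ere measures is true and standard (small slip: $\max(\phi,\psi-\epsilon)$ \emph{increases} to $\max(\phi,\psi)$ as $\epsilon\downarrow 0$; rather apply Theorem \ref{BTthm} to $\max(\phi+\epsilon,\psi)\downarrow\max(\phi,\psi)$), so indeed $MA_{\theta}(u)=MA_{\theta}(\phi)$ for $u:=\max(\phi,\psi)$. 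The genuine gap is the last step. There you invoke uniqueness, up to an additive constant, of $\theta$-psh functions with minimal singularities having a prescribed Monge--Amp\`ere measure. In the setting of the statement, where $[\theta]$ is merely big, this is a far deeper theorem than the domination principle you are proving: it is the uniqueness theorem for full-mass Monge--Amp\`ere equations (Dinew's theorem in the K\"ahler case, and in big classes part of the later variational theory), which is erected on top of exactly the elementary toolkit of Section 2 of \cite{BEGZ} -- comparison and domination principles -- so at the very least you must check that the uniqueness proof you wish to quote does not already use the present statement; as written the argument is both disproportionate and circularity-prone. Moreover, the fallback you offer does not repair this: from $\int_X(u-\phi)\,\bigl(MA_{\theta}(u)-MA_{\theta}(\phi)\bigr)=0$ integration by parts only yields $\sum_j\int_X d(u-\phi)\wedge d^c(u-\phi)\wedge\theta_u^j\wedge\theta_\phi^{n-1-j}=0$ (with $\theta_u:=\theta+dd^cu$, $\theta_\phi:=\theta+dd^c\phi$), and in a big non-K\"ahler class these mixed currents need not dominate any positive volume form, so one cannot conclude $d(u-\phi)=0$; this is precisely why uniqueness in big classes is hard, and no such ``direct'' argument is available.

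The intended proof needs one more elementary ingredient beyond your first step, but nothing as heavy as uniqueness. Since $[\theta]$ is big, Demailly's regularization \cite{Dem} provides $\rho\in PSH(X,\theta)$ with analytic singularities and $\theta+dd^c\rho\geq\delta\omega$ for some $\delta>0$; normalize $\rho\leq\phi$. For $\epsilon>0$ one has $\{\phi<(1-\epsilon)\psi+\epsilon\rho\}\subseteq\{\phi<\psi\}$, so the comparison principle recalled in Section \ref{Prel} (applicable since $\phi$ has minimal singularities) together with your hypothesis gives $\int_{\{\phi<(1-\epsilon)\psi+\epsilon\rho\}}MA_{\theta}((1-\epsilon)\psi+\epsilon\rho)\leq\int_{\{\phi<(1-\epsilon)\psi+\epsilon\rho\}}MA_{\theta}(\phi)=0$. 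Since by multilinearity $MA_{\theta}((1-\epsilon)\psi+\epsilon\rho)\geq\epsilon^nMA_{\theta}(\rho)\geq\epsilon^n\delta^n\omega^n$ outside the analytic set where $\rho$ is singular, the set $\{\phi<(1-\epsilon)\psi+\epsilon\rho\}$ is Lebesgue-null; letting $\epsilon\to0$ along a sequence shows that $\{\phi<\psi\}\cap\{\rho>-\infty\}$, hence $\{\phi<\psi\}$, is Lebesgue-null, and since $\phi$ and $\psi$ are quasi-psh the almost-everywhere inequality $\psi\leq\phi$ propagates to an everywhere inequality by the sub-mean value property. This is exactly the $\epsilon$-trick the paper uses to prove Corollary \ref{cordom2}; the K\"ahler current $\rho$ is what upgrades ``$\{\phi<\psi\}$ is $MA_{\theta}(\rho)$-null'' to ``$\{\phi<\psi\}$ is empty'', which is the role your appeal to uniqueness was meant to play.
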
 

\section{A construction on $X\times \mathbb{P}^N$}

Assume $[\theta]$ is big.

Pick $N\in \mathbb{N}$ and let $\omega_{FS}$ denote the Fubini-Study form on $\mathbb{P}^N$. If $\pi_X$ and $\pi_{\mathbb{P}^N}$ are the projections from $X\times \mathbb{P}^N$ to $X$ and $\mathbb{P}^N$ respectively we let $\tilde{\theta}:=\pi_X^*\theta+\pi_{\mathbb{P}^N}^*\omega_{FS}$. We clearly have that $[\tilde{\theta}]$ also is big.  

Let $Z_i$ be the homogeneous coordinates on $\mathbb{P}^N$ and denote $$\ln|Z_i|^2_{FS}:=\ln\left(\frac{|Z_i|^2}{\sum_{j=0}^N|Z_j|^2}\right).$$ Then $\ln|Z_i|^2_{FS}$ is $\omega_{FS}$-psh with $\omega_{FS}+dd^c\ln|Z_i|^2_{FS}$ being the current of integration along the hyperplane $\{Z_i=0\}$. 

Let $\Sigma_N$ denote the $N$-dimensional unit simplex $\Sigma_N:=\{x\in \mathbb{R}^N: x_i\geq 0, \sum x_i\leq 1\}$. To ease notation we will write $|x|:=\sum_i x_i$. 

Since $[\theta]$ is big we can pick a $\phi_0\in PSH(X,\theta)$ with analytic singularities and we now define $$\Phi:=\sup^*_{x\in \Sigma_N}\{(1-|x|)(\phi_0+\ln|Z_0|^2_{FS})+|x|\phi+\sum_{i=1}^N x_i\ln|Z_i|^2_{FS}-\sum_{i=1}^Nx_i^2\}.$$ Here $*$ means that we take the usc regularization of the supremum. We see that $\Phi$ is a $\tilde{\theta}$-psh function and since $\phi_0+\ln|Z_0|^2_{FS}$ has analytic singularities and $\Phi\geq \phi_0+\ln|Z_0|^2_{FS}$ it follows that $\Phi$ has small unbounded locus.

Key to the proof of Theorem \ref{compthm} will be the following proposition.

\begin{proposition} \label{keyprop}
We have that $$\int_{X\times \mathbb{P}^N}MA_{\tilde{\theta}}(\Phi)=N\int_{t=0}^1\left(\int_XMA_{\theta}((1-t)\phi_0+t\phi)\right) t^{N-1}dt.$$
\end{proposition}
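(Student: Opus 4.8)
The plan is to compute $MA_{\tilde\theta}(\Phi)$ on the open dense set where $\Phi$ is locally bounded and smooth enough that the Monge-Amp\`ere measure is given by the naive determinant formula \eqref{explicit}, and then integrate. Since $\Phi$ is a supremum over $x\in\Sigma_N$ of functions that are affine in $x$ plus the strictly concave penalty $-\sum x_i^2$, the supremum is, for fixed $(p,[Z])\in X\times\mathbb P^N$, a Legendre-type transform: on the region where all the relevant quantities are finite, the sup is attained at a unique interior point $x=x(p,[Z])\in\Sigma_N$ determined by the first-order conditions $\phi(p)-\phi_0(p)-\ln|Z_0|^2_{FS}+\ln|Z_i|^2_{FS}-2x_i=0$, i.e. $x_i=\tfrac12\big(\phi(p)-\phi_0(p)+\ln|Z_i/Z_0|^2\big)$. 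Thus $\Phi$ is an \emph{envelope} and on the (plurifine-)open set where the constraint is inactive one expects $MA_{\tilde\theta}(\Phi)$ to be supported where the family degenerates; the key point will instead be to recognize $\Phi$, after an appropriate change of variables, as built from the functions $(1-t)\phi_0+t\phi$.

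Concretely, I would substitute. Write $Z_i/Z_0=e^{s_i/2+i\alpha_i}$ so that $\ln|Z_i|^2_{FS}-\ln|Z_0|^2_{FS}$ becomes (up to the normalizing denominator) the coordinate $s_i\in\mathbb R$, and restrict attention to the torus-invariant part of $\mathbb P^N$. Then the term $\sum_{i=1}^N x_i\ln|Z_i|^2_{FS}+(1-|x|)\ln|Z_0|^2_{FS}$ combines with $\ln|Z_0|^2_{FS}$ into $\langle x,s\rangle$ minus a convex function of $s$, and
\[
\Phi=\ln|Z_0|^2_{FS}+\sup_{x\in\Sigma_N}\Big\{\langle x,s\rangle-\big((1-|x|)\phi_0+|x|\phi\big)^{\!*}\!\!\cdot(-1)+|x|\phi+(1-|x|)\phi_0-\textstyle\sum x_i^2\Big\},
\]
which exhibits $\Phi-\ln|Z_0|^2_{FS}$ as a partial Legendre transform in the $x$-variable. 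The point of the quadratic penalty $-\sum x_i^2$ is exactly to make this Legendre transform smooth and the optimal $x$ unique and interior, so that $MA_{\tilde\theta}(\Phi)$ can be pushed to $X$ via the coarea/change-of-variables formula: the fibre integral over the $[Z]$-directions of $(\tilde\theta+dd^c\Phi)^n$ becomes, after the change of variables $[Z]\mapsto x$, an integral over $x\in\Sigma_N$ of $(\theta+dd^c\psi_x)^{n_X}$ with $\psi_x=(1-|x|)\phi_0+|x|\phi$ weighted by the Jacobian of the Legendre map. Passing to the radial variable $t=|x|\in[0,1]$, the integral $\int_{\Sigma_N\cap\{|x|=t\}}$ contributes the factor $N t^{N-1}$ (the $(N-1)$-volume of the scaled simplex slice), giving exactly $N\int_0^1\big(\int_X MA_\theta((1-t)\phi_0+t\phi)\big)t^{N-1}\,dt$.

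The main obstacle is the regularity and the degeneration locus: one must justify that $\Phi$ is smooth (or at least $C^{1,1}$, so that \eqref{explicit} applies) on a set of full $MA_{\tilde\theta}(\Phi)$-measure, control what happens where $\phi$ or $\phi_0$ equals $-\infty$ and where the optimal $x$ hits $\partial\Sigma_N$ (these are pluripolar or lower-dimensional and carry no mass, but this needs the non-pluripolarity of the product and a Sard-type argument), and carefully compute the Jacobian factor of the Legendre change of variables so that the constant comes out to be exactly $N$ rather than $N$ times some combinatorial correction. I would handle the regularity by first treating $\phi,\phi_0$ with analytic singularities (or smooth), establishing the formula there by the explicit determinant computation and Fubini, and then pass to general $\phi$ by a decreasing approximation argument using Theorem \ref{BTthm} together with the continuity of $t\mapsto MA_\theta((1-t)\phi_0+t\phi)$ noted in Section \ref{Prel} and dominated convergence in $t$; one should check that both sides are continuous (or at least that the left side is upper semicontinuous and the right side lower semicontinuous, which suffices in combination with the reverse comparison from \eqref{introineq} applied to the approximants).
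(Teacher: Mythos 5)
Your treatment of the core computation is essentially the paper's: the quadratic penalty makes the fiberwise function a $C^{1,1}$ Legendre transform with gradient in $\Sigma_N$, the moment/gradient map pushes the fiberwise Monge--Amp\`ere measure to Lebesgue measure on the simplex, and slicing $\Sigma_N$ by $\{|x|=t\}$ produces the factor $Nt^{N-1}$. The genuine gap is in your final limiting step for general (unbounded) $\phi$. You propose to prove the formula for $\phi$ smooth or with analytic singularities and then pass to a global decreasing approximation $\phi_j\downarrow\phi$, using Theorem \ref{BTthm}, continuity in $t$, and ``the reverse comparison from \eqref{introineq} applied to the approximants.'' This does not close: Theorem \ref{BTthm} gives weak convergence of $MA_{\theta}((1-t)\phi_0+t\phi_j)$ only where the limit is locally bounded, and for general $\phi$ the limit $(1-t)\phi_0+t\phi$ need not be locally bounded off any small set --- indeed it need not have small unbounded locus at all, which is exactly why \cite[Thm 1.16]{BEGZ} cannot be invoked to compare the approximants with the limit; and \eqref{introineq} itself is the theorem that Proposition \ref{keyprop} is designed to prove, so appealing to it (beyond the small-unbounded-locus case) is circular. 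Convergence or semicontinuity of total non-pluripolar masses along decreasing sequences is precisely what is at stake in this paper and cannot be assumed at this point.

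The paper's way around this is different and you need something of this kind: all approximation is done strictly in the locally bounded regime, and the unbounded locus of $\phi$ is handled by truncation plus plurifine locality rather than by a limit of masses. On a chart $U\subseteq X\setminus\{\phi_0=-\infty\}$ with $\theta=dd^ch$, set $u=\phi_0+h$ (smooth) and $v=\phi+h$, and replace $\phi$ by $\phi_C=\max(\phi,u-C)$, which is locally bounded; the local lemma is proved first for $v$ smooth, then for $v$ locally bounded by decreasing smooth approximation, where Theorem \ref{BTthm} does apply and where one also needs the observation that all the measures $MA(\Phi_j)$ are supported in a fixed polydisc $U\times e^{C/2+1}\mathbb{D}^N$ so that the pushforwards to $U$ converge. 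Since $\Phi$ and the function built from $\phi_C$ coincide on the plurifine open set corresponding to $\{\phi>u-C\}$, plurifine locality of the Monge--Amp\`ere operator identifies the two sides of the formula there, and letting $C\to\infty$ is legitimate because neither side charges the pluripolar set $\{\phi=-\infty\}$; no limit of total masses is ever taken. Separately, your claim that the locus where the optimal $x$ hits $\partial\Sigma_N$ is ``pluripolar or lower-dimensional'' is not accurate: it is typically an open set. The correct reason it carries no mass is that the fiberwise convex function fails to be strictly convex there, so the Monge--Amp\`ere vanishes on the interior of that set, while its boundary is Lebesgue-null thanks to the $C^{1,1}$ regularity furnished by the quadratic penalty.
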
  

To prove Proposition \ref{keyprop} we will use the following lemma.

\begin{lemma} \label{keylemma}
Let $U$ be an open set in $\mathbb{C}^n$, $u$ and $v$ two psh functions on $U$, and we assume that $u$ is smooth while $v$ is locally bounded. Let $\Phi$ be the psh function on $U\times \mathbb{C}^N$ defined as $$\Phi:=\sup^*_{x\in \Sigma_N}\{(1-|x|)u+|x|v+\sum_{i=1}^Nx_i\ln|z_i|^2-\sum_{i=1}^Nx_i^2\}.$$ Then we have that $$(\pi_U)_*MA(\Phi)=N\int_{t=0}^1MA((1-|x|)u+|x|v)t^{N-1}dt.$$ 
\end{lemma}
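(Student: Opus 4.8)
The plan is to compute the Monge--Amp\`ere measure of $\Phi$ explicitly on the open set where the defining supremum is attained at an interior point of the simplex $\Sigma_N$, and then push forward the resulting expression to $U$. First I would observe that, for fixed $(z_1,\dots,z_N)\in(\mathbb{C}^*)^N$, the function to be maximized over $x\in\Sigma_N$ is strictly concave in $x$ (the $-\sum x_i^2$ term makes the Hessian in $x$ equal to $-2\,\mathrm{Id}$), so the sup is attained at a unique point $x=x(p)$, $p=(w,z)\in U\times(\mathbb{C}^*)^N$. Writing $s_i:=\ln|z_i|^2$ and $g(x):=(1-|x|)u(w)+|x|v(w)+\sum x_i s_i-\sum x_i^2$, the maximizer is characterized by the KKT conditions: on the region $O$ where $x(p)$ lies in the open simplex, $\partial g/\partial x_i=0$, i.e. $v(w)-u(w)+s_i-2x_i=0$, so $x_i(p)=\tfrac12\big(v(w)-u(w)+\ln|z_i|^2\big)$ whenever this lies in the open simplex. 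On $O$, $\Phi$ is therefore given by an explicit formula: substituting back, $\Phi(p)=u(w)+\sum_i x_i(p)^2$ with $x_i(p)$ as above — in particular $\Phi$ is $C^{1,1}$ on $O$ (it is a max of smooth functions with the envelope being smooth where the max is interior, since $u$ is smooth and $v$ is locally bounded but $\Phi|_O$ only involves $u$ and the smooth functions $s_i$, not $v$... wait, $x_i$ involves $v$), so I should instead argue via the plurifine-locality and the formula (\ref{explicit}).

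Here is the cleaner route. Fix $t\in(0,1)$ and, for each such $t$, consider the slice behaviour: the key computation is that on the plurifine-open set $O\subset U\times\mathbb{C}^N$ where the maximizer is interior, $\Phi$ agrees with the explicit function $F(w,z):=u(w)+\sum_{i=1}^N\big(\tfrac{v(w)-u(w)}{2}+\tfrac12\ln|z_i|^2\big)^2$ wherever all these arguments lie in $(0,1)$ with sum $<1$. By plurifine-locality of Monge--Amp\`ere measures (\cite[Prop. 1.4(a)]{BEGZ}) and the explicit formula (\ref{explicit}) valid because $F$ is $C^{1,1}$ in $z$ and the $w$-dependence is controlled (one must be slightly careful as $v$ is only locally bounded, so I would first prove the lemma for $v$ smooth, then approximate $v$ by a decreasing sequence of smooth psh functions and invoke the Bedford--Taylor convergence Theorem \ref{BTthm} together with continuity of the right-hand side in $t$), one computes $MA(\Phi)$ on $O$ as a wedge of $n+N$ explicit $(1,1)$-forms. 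The change of variables $x_i=\tfrac{v-u}{2}+\tfrac12\ln|z_i|^2$ turns the $dd^c\ln|z_i|^2$ directions into a push-forward onto the $x$-variables: $(\pi_U)_*$ of the $(n+N,n+N)$-form becomes, after integrating out the torus fibers in the argument of each $z_i$ and then integrating over $x\in\Sigma_N$, precisely $\int_{\Sigma_N} MA\big((1-|x|)u+|x|v\big)\,dx$ evaluated with the $(1,1)$-form $\theta=0$ (here $\theta=0$ since we are on $\mathbb{C}^n$). The final step is the elementary identity $\int_{\Sigma_N}h(|x|)\,dx=\tfrac{1}{(N-1)!}\int_0^1 h(t)\,t^{N-1}dt$ for $h(t)=\tfrac{(N-1)!}{1}\cdot(\dots)$ — more precisely, since $MA\big((1-|x|)u+|x|v\big)$ depends on $x$ only through $t=|x|$, Fubini over the simplex gives the stated factor $N\int_0^1(\cdot)t^{N-1}dt$ after accounting for the volume of the $(N-1)$-dimensional slice $\{|x|=t\}$, which equals $t^{N-1}/(N-1)!$, and the Jacobian of $x\mapsto z$.

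The only point needing care is that the complement of $O$ (where the maximizer is on the boundary $\partial\Sigma_N$) carries no mass: when the maximizer sits on a face $x_i=0$, the $\ln|z_i|^2$ term drops out and $\Phi$ locally does not depend on $z_i$, hence $dd^c\Phi$ is degenerate in that direction and the top power $(dd^c\Phi)^{n+N}$ vanishes there; when the maximizer hits $|x|=1$, similarly the "$(1-|x|)u$" coordinate direction degenerates. So $MA(\Phi)$ is supported on $\overline{O}$ and, since it puts no mass on the pluripolar set $\{z_i=0\}$ nor on the (measure-zero, and actually plurifine-boundary) set $\partial O$, the computation on $O$ captures all of it. \textbf{The main obstacle} I anticipate is making the boundary-degeneracy argument fully rigorous in the non-plurifine-open regions and handling the reduction from $v$ locally bounded to $v$ smooth while keeping the push-forward identity stable under the limit; the interior computation itself is a routine, if lengthy, change of variables.
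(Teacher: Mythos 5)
Your route is essentially the paper's: compute the interior maximizer $\hat x_i=\tfrac12(v-u+\ln|z_i|^2)$, note $\Phi=u+\sum_i\hat x_i^2$ on the region $O$ where the maximizer is interior, argue that the boundary regions carry no Monge--Amp\`ere mass, push forward over the simplex using that the integrand depends only on $t=|x|$ (giving the $Nt^{N-1}$ factor), and finally reduce the locally bounded case to the smooth case by decreasing approximation and Bedford--Taylor. So the plan is sound in outline, but two of its steps have genuine gaps rather than being routine.

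First, you dismiss $\partial O$ on the grounds that it has measure zero (and is a plurifine boundary), but for a merely locally bounded psh function the Monge--Amp\`ere measure can perfectly well charge Lebesgue-null sets (e.g.\ $\max(\ln|z|,0)$ charges the unit circle), so this step needs an actual regularity input. The paper supplies it by proving that $\Phi$ is globally $C^{1,1}$ when $v$ is smooth: fiberwise, $f_p(y)=\Phi(p,z)$ is the Legendre transform of a strictly convex function (Rockafellar, Thm.\ 26.3), and it varies smoothly in $p$. This is exactly the point you started to make and then abandoned mid-sentence; without it, neither the ``no mass on $\partial O$'' claim nor the vanishing on the face $|x|=1$ (where your phrase ``the $(1-|x|)u$ coordinate direction degenerates'' is not an argument --- the paper instead uses that $f_p$ fails to be strictly convex where $\nabla f_p\in\partial\Sigma_N$) is justified. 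Second, in the passage from smooth $v_j\searrow v$, weak convergence $MA(\Phi_j)\to MA(\Phi)$ on $U\times\mathbb{C}^N$ does \emph{not} by itself give convergence of the pushforwards, since $\pi_U$ has non-compact fibers and mass could escape; you correctly flag this as the main obstacle but offer no fix. The paper's fix is concrete: on $V$ one has $0\le\hat x_i\le 1$, so if $v_j\ge v>u-C$ then every $MA(\Phi_j)$ (and $MA(\Phi)$) is supported in the fixed set $U\times e^{C/2+1}\mathbb{D}^N$, and with this uniform fiber localization the pushforwards do converge. Supplying these two ingredients would turn your outline into the paper's proof.
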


\begin{proof}
First we assume that $v$ is smooth.

Clearly $\Phi$ is invariant under the standard torus-action on $\mathbb{C}^N$ so if we write $y_i:=\ln|z_i|^2$ then for each $p\in U$, $f_p(y):=\Phi(p,z)$ is a convex function on $\mathbb{R}^N$. One notes that $f_p$ is $C^{1,1}$ (e.g. by observing that it is the Legendre transform of a strictly convex function and using \cite[Thm. 26.3]{Roc}) and since $f_p$ varies smoothly with $p$ it follows that $\Phi$ itself is $C^{1,1}$.

The gradient of $f_p$ will map $\mathbb{R}^N$ to $\Sigma_N$, indeed $\nabla f_p(y)=x$ iff $$f_p(y)=(1-|x|)u+|x|v+\sum_{i=1}^Nx_i\ln|z_i|^2-\sum_{i=1}^Nx_i^2.$$ Let us write $\hat x(p,z):=\nabla f_p(y)$, hence we have that $$\Phi(p,z)=(1-|\hat{x}|)u+|\hat{x}|v+\sum_{i=1}^N\hat{x}_i\ln|z_i|^2-\sum_{i=1}^N\hat{x}_i^2.$$ 

Let $V:=\hat{x}^{-1}(\Sigma_N^{\circ})$.  Using the fact that $\Phi$ is $C^{1,1}$ and $\partial V$ has measure zero we get that $MA(\Phi)=\mathbbm{1}_V(dd^c\Phi)^{n+N}+\mathbbm{1}_{(V^c)^{\circ}}(dd^c\Phi)^{n+N}$. Now $f_p$ fails to be strictly convex outside of $\nabla f^{-1}(\Sigma_N^{\circ})$ thus $\mathbbm{1}_{(V^c)^{\circ}}(dd^c\Phi)^{n+N}=0$, i.e. $MA(\Phi)=\mathbbm{1}_V(dd^c\Phi)^{n+N}$. 

On $V$ we have $$\hat{x}_i(p,z)=\frac{1}{2}(v(p)-u(p)+\ln|z_i|^2)$$ and so there $$\Phi(p,z)=u+\sum_i \hat{x}_i((v-u+\ln|z_i|^2-\hat{x}_i))=u+\sum_i \hat{x}_i^2.$$ We get then on $V$ 
\begin{eqnarray*}
dd^c\Phi=dd^cu+\sum_i \hat{x}_i(dd^c(v-u)-dd^c\hat{x}_i)+\sum_i d\hat{x}_i\wedge d^c \hat{x}_i+\sum_i \hat{x}_idd^c\hat{x}_i=\\=(1-|\hat{x}|)dd^cu +|\hat{x}|dd^cv +\sum_i d\hat{x}_i\wedge d^c \hat{x}_i.
\end{eqnarray*}

Since $(\sum_i d\hat{x}_i\wedge d^c \hat{x}_i)^k=0$ for any $k>N$ it follows that 
\begin{equation} \label{mongeampere2}
MA(\Phi)=\mathbbm{1}_V((1-|\hat{x}|)dd^cu+|\hat{x}|dd^cv)^n\wedge (\sum_i d\hat{x}_i\wedge d^c \hat{x}_i)^N.
\end{equation}

Let $V_p:=V\cap (\{p\}\times \mathbb{C}^N)$. Since $dd^cu$ and $dd^cv$ do not contain any terms with $dz_i$ or $d\bar{z}_i$, only the derivatives of $\hat{x}_i$ in the $V_p$-directions will enter into the expression of $MA(\Phi)$. One easily checks that $$(\sum_i d(\hat{x}_i)_{|V_p}\wedge d^c (\hat{x}_i)_{|V_p})^N=MA(\Phi_{|V_p})$$ and hence from (\ref{mongeampere2}) we get 
\begin{equation} \label{ma3}
MA(\Phi)=\mathbbm{1}_V((1-|\hat{x}|)dd^cu+|\hat{x}|dd^cv)^n\wedge MA(\Phi_{|V_p}).
\end{equation}

It is standard fact that if we let $Y$ denote the map $z\mapsto y$ then 
\begin{equation} \label{moment1}
Y_*(MA(\Phi_{|\{p\}\times\mathbb{C}^N}))=N!MA_{\mathbb{R}^N}(f_p).
\end{equation}

It is also well known that the real Monge-Amp\`ere measure is connected with the gradient map so that
\begin{equation} \label{moment2}
(\nabla f_p)_*(MA_{\mathbb{R}^N}(f_p))=dx_{|\Sigma_N}.
\end{equation}

let $\mu$ be the "moment" map from $U\times \mathbb{C}^N$ to $U\times \Sigma_N$ given by $\mu(w,z):=(w,\hat{x}(w,z))$. Using (\ref{ma3}), (\ref{moment1}) and (\ref{moment2}) then gives us that 
\begin{equation}  \label{eqeq1}
\mu_*(MA(\Phi))=MA_U((1-|x|)u+|x|v)N!dx_{|\Sigma^{\circ}_N}.
\end{equation}

If we let $\pi'$ denote the projection from $U\times \Sigma_N$ to $U$ we can write $\pi_U=\pi'\circ \mu$ and hence by (\ref{eqeq1}) 
\begin{eqnarray} \label{comeon}
(\pi_U)_*MA(\Phi)=\pi'_*MA((1-|x|)u+|x|v)N!dx_{|\Sigma_N}= \nonumber \\=N!\int_{\Sigma_N}MA((1-|x|)u+|x|v)dx.
\end{eqnarray}
By homogeneity the volume of $\Sigma_N\cap\{|x|\leq t\}$ is $t^N/N!$ and it follows that $$N!\int_{\Sigma_N}MA((1-|x|)u+|x|v)dx=N\int_{t=0}^1MA((1-t)u+tv)t^{N-1}dt,$$ with (\ref{comeon}) proving the Lemma in the case when $v$ is smooth.

Recall that on $V$ we had that $$\hat{x}_i(p,z)=\frac{1}{2}(v(p)-u(p)+\ln|z_i|^2).$$ Since clearly $0\leq \hat{x}_i\leq 1$ this implies that if $v>u-C$ for some constant $C$ then the closure of $V$ is contained in $U\times e^{C/2+1}\mathbb{D}^N$. But recall that $MA(\Phi)$ was supported on the closure of $V$, hence $MA(\Phi)$ is supported on $U\times e^{C/2+1}\mathbb{D}^N$ where $e^{C/2+1}\mathbb{D}^N$ denotes the polydisc $\{z: \forall i, |z_i|<e^{C/2+1}\}$.

Now let $v$ be just locally bounded. Without loss of generality we can assume that $v$ is in fact bounded, and that say $v>u-C$ for some contant $C$.  

Let $v_j$ be a sequence of smooth psh functions on $U$ decreasing to $v$ and write $$\Phi_j:=\sup^*_{x\in \Sigma_N}\{(1-|x|)u+|x|v_j+\sum_{i=1}^Nx_j\ln|z_j|^2-\sum_{i=1}^Nx_i^2\}.$$ By what we have established 
\begin{equation} \label{pushprop}
(\pi_U)_*MA(\Phi_j)=N\int_{t=0}^1MA((1-t)u+tv_j)t^{N-1}dt.
\end{equation}
We have that $\Phi_j$ decreases to $\Phi$ and so by Theorem \ref{BTtheorem} $MA(\Phi_j)$ will converge weakly to $MA(\Phi)$. Also note that $v_j\geq v>u-C$ so as a consequence each $MA(\Phi_j)$ is supported on $U\times e^{C/2+1}\mathbb{D}^N$, which implies that $(\pi_U)_*MA(\Phi_j)$ corverge weakly to $(\pi_U)_*MA(\Phi)$. Since again by Theorem \ref{BTthm} each $MA((1-t)u+tv_j)$ converge weakly to $MA((1-t)u+tv)$, this proves the Lemma.  

\end{proof}

We now prove Proposition \ref{keyprop}.

\begin{proof}
We wish to show that
\begin{equation} \label{pushpush}
(\pi_X)_*MA_{\tilde{\theta}}(\Phi)=N\int_{t=0}^1MA_{\theta}((1-t)\phi_0+t\phi)t^{N-1}dt.
\end{equation}

The Monge-Amp\`ere measure $MA_{\tilde{\theta}}(\Phi)$ will not charge the analytic set $\{\phi_0=-\infty\}$ so let us pick a coordinate chart $U\subseteq X\setminus \{\phi_0=-\infty\}$ where $\theta=dd^ch$ for some smooth function $h$. Then $u:=\phi_0+h$ is a smooth psh function on $U$ while $v:=\phi+h$ is simply psh on $U$. We thus need to show that 
\begin{equation} \label{eqpush}
(\pi_U)_*MA(\Phi+h)=N\int_{t=0}^1MA((1-t)u+tv)t^{N-1}dt
\end{equation} 
where $$\Phi+h=\sup^*_{x\in \Sigma_N}\{(1-|x|)u+|x|v+\sum_{i=1}^Nx_i\ln|z_i|^2-\sum_{i=1}^Nx_i^2\}.$$ 

Pick a constant $C$ and let $\phi_C:=\max(\phi,u-C)$, $v_C:=\phi_C+h$ and $$\Phi_C+h=\sup^*_{x\in \Sigma_N}\{(1-|x|)u+|x|v_C+\sum_{i=1}^Nx_i\ln|z_i|^2-\sum_{i=1}^Nx_i^2\}.$$ Since $u$ is smooth and $v_C$ locally bounded on $U$ Lemma \ref{keylemma} says that $$(\pi_U)_*MA(\Phi_C+h)=N\int_{t=0}^1MA((1-t)u+tv_C)t^{N-1}dt.$$ By the fact that the Monge-Amp\`ere measures are local in the plurifine topology we thus get that $$\mathbbm{1}_{\{\phi>u-C\}}(\pi_U)_*MA(\Phi+h)=\mathbbm{1}_{\{\phi>u-C\}}N\int_{t=0}^1MA((1-t)u+tv)t^{N-1}dt.$$ Observing that neither $(\pi_U)_*(MA(\Phi+h))$ nor $N\int_{t=0}^1MA((1-t)u+tv)t^{N-1}dt$ puts any mass on the pluripolar set $\{v=-\infty\}$ we get (\ref{eqpush}) by letting $C\to \infty$. This establishes (\ref{pushpush}) while integrating it over $X$ finally yields the Proposition.

\end{proof}

\section{Proofs of main results}

We can now prove Theorem \ref{compthm}. 

\begin{proof}
Let $\phi$ and $\psi$ be as in the statement of Theorem \ref{compthm}. By \cite[Prop. 1.22]{BEGZ}, if $[\theta]$ is not big then both Monge-Amp\`ere masses are zero, thus we can assume that $[\theta]$ is big.

Pick a large $N$ and let $\Phi$ and $\Psi$ be defined as above. From the construction it is clear that $\Phi$ is less singular than $\Psi$. Since they also have small unbounded locus we know from \cite[Thm. 1.16]{BEGZ} that $$\int_{X\times \mathbb{P}^N}MA_{\tilde{\theta}}(\Phi)\geq \int_{X\times \mathbb{P}^N}MA_{\tilde{\theta}}(\Psi).$$ Combined with Proposition \ref{keyprop} we get that 
\begin{eqnarray} \label{ineq1}
N\int_{t=0}^1\left(\int_XMA_{\theta}((1-t)\phi_0+t\phi)\right) t^{N-1}dt\geq  \nonumber \\ \geq N\int_{t=0}^1\left(\int_XMA_{\theta}((1-t)\phi_0+t\psi)\right) t^{N-1}dt.
\end{eqnarray}
Recall from Section \ref{Prel} that the function $$g(t):=\int_XMA_{\theta}((1-t)u+t\phi)$$ is continuous in $t\in [0,1]$. It follows that 
\begin{equation} \label{lim1}
\lim_{N\to \infty}N\int_{t=0}^1\left(\int_XMA_{\theta}((1-t)\phi_0+t\phi)\right) t^{N-1}dt=\int_XMA_{\theta}(\phi),
\end{equation}
and similarly for $\psi$. The theorem follows from combining (\ref{ineq1}) and (\ref{lim1}).
\end{proof}

Let us recall and prove the corollary stated in the Introduction.

\begin{corollary} \label{corcomp2} 
Let $\phi$ and $\psi$ be $\theta$-psh and assume $\phi$ is less singular than $\psi$. Then $$\int_{\{\phi<\psi\}}MA_{\theta}(\psi)\leq \int_{\{\phi<\psi\}}MA_{\theta}(\phi).$$
\end{corollary}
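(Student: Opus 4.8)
The plan is to deduce Corollary~\ref{corcomp2} from Theorem~\ref{compthm} by the same device that Boucksom--Eyssidieux--Guedj--Zeriahi used in \cite[Rem.~2.4]{BEGZ}, replacing their use of Theorem~\ref{BEGZthm} by the now-unrestricted Theorem~\ref{compthm}. First I would set $\rho:=\max(\phi,\psi)$, which is again $\theta$-psh. Since $\phi$ is less singular than $\psi$, we have $\phi\geq\psi+O(1)$, so $\rho=\phi$ outside a bounded neighbourhood of the set $\{\phi<\psi\}$; more precisely $\rho-\phi$ is a bounded function, hence $\rho$ is less singular than $\phi$ (and trivially $\phi$ is less singular than $\rho$, so in fact $\rho=\phi+O(1)$). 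Applying Theorem~\ref{compthm} in both directions gives $\int_X MA_\theta(\rho)=\int_X MA_\theta(\phi)$.

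Next I would localise in the plurifine topology. The set $O:=\{\phi<\psi\}$ is plurifine open (it equals $\{\phi-\psi<0\}$ with $\phi-\psi$ quasi-psh locally, or one writes it using the standard trick $\{\phi<\psi\}=\bigcup_c\{\phi<c<\psi\}$ up to a pluripolar set, and $MA_\theta$ charges no pluripolar set). On $O$ we have $\rho=\psi$, so by plurifine locality of the non-pluripolar Monge--Amp\`ere measure (\cite[Prop.~1.4(a)]{BEGZ}),
\begin{equation*}
\mathbbm{1}_O MA_\theta(\rho)=\mathbbm{1}_O MA_\theta(\psi),
\qquad\text{so}\qquad
\int_{\{\phi<\psi\}}MA_\theta(\psi)=\int_{\{\phi<\psi\}}MA_\theta(\rho).
\end{equation*}
Likewise, on the plurifine open set $\{\phi>\psi\}$ we have $\rho=\phi$, hence $\mathbbm{1}_{\{\phi>\psi\}}MA_\theta(\rho)=\mathbbm{1}_{\{\phi>\psi\}}MA_\theta(\phi)$.

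Now I would combine these. Writing $X$ as the disjoint union $\{\phi<\psi\}\sqcup\{\phi=\psi\}\sqcup\{\phi>\psi\}$ and using $\int_X MA_\theta(\rho)=\int_X MA_\theta(\phi)$,
\begin{equation*}
\int_{\{\phi<\psi\}}MA_\theta(\rho)+\int_{\{\phi=\psi\}}MA_\theta(\rho)+\int_{\{\phi>\psi\}}MA_\theta(\phi)=\int_{\{\phi<\psi\}}MA_\theta(\phi)+\int_{\{\phi=\psi\}}MA_\theta(\phi)+\int_{\{\phi>\psi\}}MA_\theta(\phi).
\end{equation*}
Cancelling the $\{\phi>\psi\}$ terms and using $\int_{\{\phi<\psi\}}MA_\theta(\rho)=\int_{\{\phi<\psi\}}MA_\theta(\psi)$ yields
\begin{equation*}
\int_{\{\phi<\psi\}}MA_\theta(\psi)=\int_{\{\phi<\psi\}}MA_\theta(\phi)+\Big(\int_{\{\phi=\psi\}}MA_\theta(\phi)-\int_{\{\phi=\psi\}}MA_\theta(\rho)\Big),
\end{equation*}
so it suffices to check that $\int_{\{\phi=\psi\}}MA_\theta(\rho)\geq\int_{\{\phi=\psi\}}MA_\theta(\phi)$; in fact I expect $\mathbbm{1}_{\{\phi=\psi\}}MA_\theta(\rho)\geq\mathbbm{1}_{\{\phi=\psi\}}MA_\theta(\phi)$ pointwise as measures, which together with the above forces the inequality in the corollary (indeed one gets $\int_{\{\phi<\psi\}}MA_\theta(\psi)\leq\int_{\{\phi<\psi\}}MA_\theta(\phi)$). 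The main obstacle is precisely this contact-set estimate: on $\{\phi=\psi\}$ the two functions agree but their Monge--Amp\`ere measures need not, and one cannot localise plurifinely on a set that is not plurifine open. The cleanest fix is to avoid the contact set altogether: apply the argument above with $\psi$ replaced by $\psi-\varepsilon$ for $\varepsilon>0$ (still $\theta$-psh, still more singular than $\phi$), so that $\{\phi=\psi-\varepsilon\}$ contributes in the same way but now $\{\phi<\psi-\varepsilon\}\nearrow\{\phi<\psi\}$ only after letting $\varepsilon\to0$; running the three-set decomposition for $\rho_\varepsilon:=\max(\phi,\psi-\varepsilon)$ and using $\int_X MA_\theta(\rho_\varepsilon)=\int_X MA_\theta(\phi)$ gives $\int_{\{\phi<\psi-\varepsilon\}}MA_\theta(\psi-\varepsilon)\leq\int_{\{\phi\le\psi-\varepsilon\}}MA_\theta(\phi)$, and letting $\varepsilon\to0$ (with $MA_\theta(\psi-\varepsilon)=MA_\theta(\psi)$ since they differ by a constant, and monotone convergence of the indicator sets) yields the corollary.
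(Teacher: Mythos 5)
Your final argument---replacing $\psi$ by $\psi-\varepsilon$, using Theorem \ref{compthm} in both directions to get $\int_X MA_{\theta}(\max(\phi,\psi-\varepsilon))=\int_X MA_{\theta}(\phi)$, localising plurifinely on $\{\phi<\psi-\varepsilon\}$ and $\{\phi>\psi-\varepsilon\}$, discarding the nonnegative contact-set mass so that $\int_{\{\phi<\psi-\varepsilon\}}MA_{\theta}(\psi)\leq\int_{\{\phi\leq\psi-\varepsilon\}}MA_{\theta}(\phi)\leq\int_{\{\phi<\psi\}}MA_{\theta}(\phi)$, and letting $\varepsilon\to 0$---is correct and is essentially the paper's own proof (following \cite[Cor. 2.3, Rmk. 2.4]{BEGZ}). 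The preliminary discussion with $\rho=\max(\phi,\psi)$ and the unproved inequality on $\{\phi=\psi\}$ is an unnecessary detour, which your $\varepsilon$-shift rightly sidesteps.
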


\begin{proof}
Here we precisely follow \cite[Cor. 2.3, Rmk. 2.4]{BEGZ}. 

Let $\epsilon>0$ and $\phi_{\epsilon}:=\max(\phi,\psi-\epsilon)$. By Theorem \ref{compthm} we have that $\int_XMA_{\theta}(\phi_{\epsilon})=\int_XMA_{\theta}(\phi)$ and so using the plurifine locality of Monge-Amp\`ere measures we get 
\begin{eqnarray*}
\int_XMA_{\theta}(\phi)=\int_XMA_{\theta}(\phi_{\epsilon})\geq \int_{\{\phi<\psi-\epsilon\}}MA_{\theta}(\phi_{\epsilon})+\int_{\{\phi>\psi-\epsilon\}}MA_{\theta}(\phi_{\epsilon})=\\
=\int_{\{\phi<\psi-\epsilon\}}MA_{\theta}(\psi)+\int_{\{\phi>\psi-\epsilon\}}MA_{\theta}(\phi)=\\
=\int_{\{\phi<\psi-\epsilon\}}MA_{\theta}(\psi)+\int_XMA_{\theta}(\phi)-\int_{\{\phi\leq\psi-\epsilon\}}MA_{\theta}(\phi),
\end{eqnarray*}
and hence $$\int_{\{\phi<\psi-\epsilon\}}MA_{\theta}(\psi)\leq \int_{\{\phi\leq\psi-\epsilon\}}MA_{\theta}(\phi)\leq \int_{\{\phi<\psi\}}MA_{\theta}(\phi).$$ The result now follows from letting $\epsilon$ tend to zero.
\end{proof}

We will also mention a second corollary, namely the following domination principle:

\begin{corollary} \label{cordom2}
Let $\phi$, $\psi$ and $\rho$ be $\theta$-psh, and assume that $\phi$ is less singular than $\psi$ and $\rho$. Then if $\phi\geq \psi$ a.e. with respect to $MA(\phi)$ it follows that $\phi\geq \psi$ a.e. also with respect to $MA(\rho)$.
\end{corollary}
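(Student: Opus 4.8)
The plan is to reduce to the domination principle stated earlier in the Preliminaries (the one for minimal singularities), using Corollary \ref{corcomp2} (the comparison principle) and the plurifine locality of Monge-Amp\`ere measures as the bridge. The natural first step is to set, for $\epsilon>0$, the function $\psi_\epsilon:=\max(\psi,\phi-\epsilon)$. Then $\psi_\epsilon$ is $\theta$-psh, it has the same singularities as $\phi$ (since $\phi-\epsilon\le\psi_\epsilon\le\phi+O(1)$, using that $\phi$ is less singular than $\psi$), and on the plurifine open set $\{\psi>\phi-\epsilon\}$ we have $\psi_\epsilon=\psi$ while on $\{\psi<\phi-\epsilon\}$ we have $\psi_\epsilon=\phi-\epsilon$. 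I would also replace $\phi$ by the same kind of truncation against $\rho$ if needed, i.e. note that $\phi$ being less singular than $\rho$ lets us compare their sublevel sets; the point of all truncations is to arrange functions with equal singularities so that Corollary \ref{corcomp2} applies with equality of total masses.

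The core of the argument: assume $\phi\ge\psi$ a.e.\ $MA_\theta(\phi)$, i.e.\ $MA_\theta(\phi)(\{\phi<\psi\})=0$. I want to show $MA_\theta(\rho)(\{\phi<\psi\})=0$. Apply Corollary \ref{corcomp2} with the roles played by a less singular function and $\psi$: taking $\phi$ (which is less singular than both $\psi$ and $\rho$) as the ``less singular'' function, the comparison principle gives $\int_{\{\phi<\psi\}}MA_\theta(\psi)\le\int_{\{\phi<\psi\}}MA_\theta(\phi)=0$, so in fact $MA_\theta(\psi)(\{\phi<\psi\})=0$ as well. Now I would run the comparison principle a second time with $\rho$ in place of one of the functions: since $\phi$ is less singular than $\rho$, Corollary \ref{corcomp2} applied to the pair $(\phi,\rho)$ yields $\int_{\{\phi<\rho\}}MA_\theta(\rho)\le\int_{\{\phi<\rho\}}MA_\theta(\phi)$, but this controls $\{\phi<\rho\}$, not $\{\phi<\psi\}$, so a more delicate comparison is needed. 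The right move is to compare $\rho$ against $\rho_\epsilon:=\max(\rho,\psi-\epsilon)$ or against $\max(\rho,\phi-\epsilon)$: since $\phi$ is less singular than $\rho$, the truncation $\max(\rho,\phi-\epsilon)$ has the same singularities as $\phi$, Theorem \ref{compthm} gives equal total masses, and then plurifine locality transfers the vanishing of $MA_\theta(\phi)$ on $\{\phi<\psi\}$ (a set on which one can arrange the truncated functions to agree suitably) to vanishing of $MA_\theta(\rho)$ there, after letting $\epsilon\to0$ and using that no mass is charged on the pluripolar locus $\{\phi=-\infty\}$.

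I expect the main obstacle to be the bookkeeping in the second comparison step: unlike the clean situation where one dominates on the set $\{\phi<\rho\}$, here the relevant set $\{\phi<\psi\}$ is not adapted to $\rho$, so one must interpose truncations of the form $\max(\rho,\psi-\epsilon)$, check that they have minimal-enough singularities relative to $\phi$ for Theorem \ref{compthm} to give mass equality, identify which plurifine open sets the various truncated functions agree on, and verify that the error terms vanish as $\epsilon\to0$. A cleaner alternative, which I would try first, is: by Corollary \ref{corcomp2} applied to $(\phi,\psi)$ one already gets $MA_\theta(\psi)(\{\phi<\psi\})=0$ from the hypothesis; then apply Corollary \ref{corcomp2} to $(\phi,\max(\rho,\psi))$ — this last function has the same singularities as $\phi$ — to see that on the plurifine open set $\{\rho<\psi\}$ we have $\max(\rho,\psi)=\psi$, and on $\{\phi<\psi\}\cap\{\rho\ge\psi\}$ we have $\phi<\psi\le\rho$ so $\phi<\rho$; splitting $\{\phi<\psi\}$ accordingly and combining the two vanishing statements with plurifine locality should close the argument.
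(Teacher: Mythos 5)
Your proposal has a genuine gap: at no point do you obtain an upper bound for the measure $MA_{\theta}(\rho)$ on (a set exhausting) $\{\phi<\psi\}$, which is what the statement requires. Neither of the two tools you invoke can do this job. Plurifine locality only identifies $MA_{\theta}(u)$ with $MA_{\theta}(v)$ on a plurifine open set where $u=v$; knowing $MA_{\theta}(\phi)(\{\phi<\psi\})=0$ (or, after your first application of Corollary \ref{corcomp2}, that $MA_{\theta}(\psi)(\{\phi<\psi\})=0$) says nothing about $MA_{\theta}(\rho)$ on that set, because neither $\rho$ nor $\max(\rho,\phi-\epsilon)$ nor $\max(\rho,\psi)$ agrees with $\phi$ or $\psi$ there: on $\{\rho<\psi\}$ your function $\max(\rho,\psi)$ coincides with $\psi$, so its Monge-Amp\`ere measure there is $MA_{\theta}(\psi)$, not $MA_{\theta}(\rho)$, and the vanishing you derive is about the wrong measure. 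Likewise Corollary \ref{corcomp2} is an inequality between integrals over the specific set $\{\phi<v\}$ attached to the pair being compared; it is not a domination of measures, so you cannot restrict it to the subset $\{\phi<\psi\}\cap\{\rho\geq\psi\}\subseteq\{\phi<\rho\}$, and in any case the hypothesis gives no control of $\int_{\{\phi<\rho\}}MA_{\theta}(\phi)$. So both your main route and the ``cleaner alternative'' stall exactly at the point you flagged as bookkeeping.

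The missing idea (which is how the paper argues, following \cite[Cor. 2.5]{BEGZ}) is to test against the convex combination $(1-\epsilon)\psi+\epsilon\rho$ rather than against maxima. After normalizing $\rho\leq\phi$ one has $\{\phi<(1-\epsilon)\psi+\epsilon\rho\}\subseteq\{\phi<\psi\}$, and by multilinearity and positivity of the non-pluripolar product $MA_{\theta}((1-\epsilon)\psi+\epsilon\rho)\geq\epsilon^{n}MA_{\theta}(\rho)$; this is the step that brings $MA_{\theta}(\rho)$ into play at all. Corollary \ref{corcomp2} applied to the pair $(\phi,(1-\epsilon)\psi+\epsilon\rho)$ then gives
\begin{equation*}
\epsilon^{n}\int_{\{\phi<(1-\epsilon)\psi+\epsilon\rho\}}MA_{\theta}(\rho)\leq \int_{\{\phi<(1-\epsilon)\psi+\epsilon\rho\}}MA_{\theta}(\phi)=0
\end{equation*}
by hypothesis, and letting $\epsilon\to 0$ (the sets increase to $\{\phi<\psi\}$ up to the pluripolar set $\{\rho=-\infty\}$, which the non-pluripolar measure $MA_{\theta}(\rho)$ does not charge) yields the conclusion. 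A further small inaccuracy: $\max(\rho,\psi)$ need not have the same singularities as $\phi$; it is merely more singular than $\phi$, which is all that Corollary \ref{corcomp2} would need anyway.
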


\begin{proof}
Here we precisely follow \cite[Cor. 2.5]{BEGZ}.

We can assume that $\rho\leq \phi$. Let $\epsilon>0$. By Corollary \ref{corcomp2} we get that 
\begin{eqnarray} \label{finalfinal}
\epsilon^n\int_{\{\phi<(1-\epsilon)\psi+\epsilon\rho\}}MA_{\theta}(\rho)\leq \int_{\{\phi<(1-\epsilon)\psi+\epsilon\rho\}}MA_{\theta}((1-\epsilon)\psi+\epsilon\rho)\leq \nonumber \\ \leq \int_{\{\phi<(1-\epsilon)\psi+\epsilon\rho\}}MA_{\theta}(\phi).
\end{eqnarray}
But since $\{\phi<(1-\epsilon)\psi+\epsilon\rho\}\subseteq \{\phi<\psi\}$ it follows that $$\int_{\{\phi<(1-\epsilon)\psi+\epsilon\rho\}}MA_{\theta}(\phi)=0$$ and hence by (\ref{finalfinal}) $$\int_{\{\phi<(1-\epsilon)\psi+\epsilon\rho\}}MA_{\theta}(\rho)=0.$$ Letting $\epsilon$ tend to zero yields the result.
\end{proof}

\end{document}